\DeclareMathAlphabet{\mathpzc}{OT1}{pzc}{m}{it}
\theoremstyle{plain}
\newtheorem{theorem}{Theorem}[section]
\newtheorem{proposition}[theorem]{Proposition}
\newtheorem{lemma}[theorem]{Lemma}
\newtheorem{corollary}[theorem]{Corollary}
\newtheorem{heuristic}[theorem]{Heuristic}
\newtheorem{expectation}[theorem]{Heuristic Expectation}
\theoremstyle{definition}
\newtheorem{remark}[theorem]{Remark}
\newtheorem{definition}[theorem]{Definition}
\newtheorem{algorithm}[theorem]{Algorithm}
\newtheorem{notation}[theorem]{Notation}
\DeclareMathOperator{\Cl}{Cl}
\DeclareMathOperator{\degree}{deg}
\DeclareMathOperator{\defect}{defect}
\DeclareMathOperator{\End}{End}
\DeclareMathOperator{\Hom}{Hom}
\DeclareMathOperator{\Jac}{Jac}
\DeclareMathOperator{\Otilde}{\tilde{O}}
\DeclareMathOperator{\pol}{\scalebox{1.2}{$\mathpzc{Pols}$}}
\DeclareMathOperator{\cur}{\scalebox{1.2}{$\mathpzc{Curves}$}}
\newcommand{\FF}{{\mathbf{F}}}
\newcommand{\HH}{{\mathbf{H}}}
\newcommand{\PP}{{\mathbf{P}}}
\newcommand{\QQ}{{\mathbf{Q}}}
\newcommand{\ZZ}{{\mathbf{Z}}}
\newcommand{\frakp}{{\mathfrak p}}
\newcommand{\calA}{{\mathcal{A}}}
\newcommand{\calAbar}{\bar{\mathcal{A}}}
\newcommand{\calC}{{\mathcal{C}}}
\newcommand{\calD}{{\mathcal{D}}}
\newcommand{\calE}{{\mathcal{E}}}
\newcommand{\calL}{{\mathcal{L}}}
\newcommand{\calO}{{\mathcal{O}}}
\newcommand{\calS}{{\mathcal{S}}}
\newcommand{\calObar}{\bar{\mathcal{O}}}
\newcommand{\eps}{\varepsilon}
\newcommand{\Fq}{\FF_q}
\newcommand{\Fqbar}{\bar{\FF}_q}
\newcommand{\Kbar}{\bar{K}}
\newcommand{\upleftparen}{\textup{\char40}}
\newcommand{\uprightparen}{\textup{\char41}}
\renewcommand{\hat}{\widehat}
\renewcommand{\tilde}{\widetilde}
\renewcommand{\mod}{\bmod}
\providecommand{\abs}[1]{\lvert#1\rvert}
\newcommand\lowtilde{\lower0.7ex\hbox{\textasciitilde}}
\newcommand{\mybar}[1]{
  \mathchoice
  {#1\llap{$\overline{\phantom{\displaystyle\rm#1}}$}}
  {#1\llap{$\overline{\phantom{\textstyle\rm#1}}$}}
  {#1\llap{$\overline{\phantom{\scriptstyle\rm#1}}$}}
  {#1\llap{$\overline{\phantom{\scriptscriptstyle\rm#1}}$}}
}  
\renewcommand{\bar}{\mybar}
\newlength{\algindent}\settowidth{\algindent}{\textit{Output}:\hskip0.5em }
\newlength{\alglabel}\settowidth{\alglabel}{\textit{Output}:}
\newcounter{stepcount}
\newenvironment{algtop}
{\quad\begin{list}{\arabic{stepcount}.}{\leftmargin=\algindent\labelwidth=\algindent\itemsep=\smallskipamount\usecounter{stepcount}}}
{\end{list}}
\newcommand{\algin}{\item[\emph{Input}:]}
\newcommand{\algout}{\item[\emph{Output}:]}
\newenvironment{alglist}
{\quad\begin{list}{\arabic{stepcount}.}{\leftmargin=1.5em\labelwidth=1em\labelsep0.5em\itemsep=\smallskipamount\usecounter{stepcount}}}
{\end{list}}
\newcounter{substepcount}
\newenvironment{algsublist}
{\quad\begin{list}{(\/\rlap{\alph{substepcount}}\phantom{d}\/)}{\usecounter{substepcount}}}
{\end{list}}
\begin{document}

\title[Curves of genus $4$ with many points]
{Quickly constructing \\ curves of genus $4$ with many points}

\author{Everett W. Howe} 
\address{Center for Communications Research,
         4320 Westerra Court,
         San Diego, CA 92121, USA}
\email{however@alumni.caltech.edu}
\urladdr{http://www.alumni.caltech.edu/\lowtilde{}however/}

\date{14 June 2015}
\keywords{Curve, Jacobian, defect, rational points}

\subjclass[2010]{Primary 11G20; Secondary 14G05, 14G10, 14G15} 

\begin{abstract}
The \emph{defect} of a curve over a finite field is the difference between the 
number of rational points on the curve and the Weil--Serre bound for the curve.
We present a construction for producing genus-$4$ double covers of genus-$2$ 
curves over finite fields such that the defect of the double cover is not much 
more than the defect of the genus-$2$ curve. We give an algorithm that uses 
this construction to produce genus-$4$ curves with small defect. Heuristically,
for all sufficiently large primes and for almost all prime powers $q$, the 
algorithm is expected to produce a genus-$4$ curve over $\Fq$ with defect at
most $4$ in time $\Otilde(q^{3/4})$.

As part of the analysis of the algorithm, we present a reinterpretation of 
results of Hayashida on the number of genus-$2$ curves whose Jacobians are 
isomorphic to the square of a given elliptic curve with complex multiplication
by a maximal order.  We show that a category of principal polarizations on the 
square of such an elliptic curve is equivalent to a category of right ideals in 
a certain quaternion order.
\end{abstract}

\maketitle

\section{Introduction}
\label{S:intro}

For every prime power $q$ and non-negative integer $g$, we let $N_q(g)$ denote
the maximum number of rational points on a smooth, projective, absolutely
irreducible curve of genus $g$ over the finite field~$\Fq$. There are many 
interesting questions one might ask about the asymptotic rate of growth of
$N_q(g)$, but in this paper we will focus on the problem of determining, for
specific $q$ and $g$, reasonably tight upper and lower bounds on~$N_q(g)$. 
In particular, we will consider the case where $g=4$.

The Riemann Hypothese for curves over finite fields, proven in the 1940's by
Weil~\cites{Weil1940,Weil1941,Weil1945,Weil1946}, shows that for every 
genus-$g$ curve $C$ over $\Fq$ we have 
\[ q+1-2g\sqrt{q} \le \#C(\Fq) \le q+1+2g\sqrt{q}.\]
This gives the upper bound $N_q(g) \le q+1+2g\sqrt{q}.$ Serre~\cite{Serre1983a}
improved this bound for nonsquare $q$ by showing that in fact
\[ N_q(g) \le q + 1 + g \lfloor 2\sqrt{q}\rfloor.\]
When $q \ge 2g + \sqrt{2g} + 1$, this \emph{Weil--Serre upper bound} is usually 
the best upper bound we know for $N_q(g)$; for certain proper prime powers 
$q\ge 2g+ \sqrt{2g} + 1$ there are improvements that can be made (see, for 
example, Theorem~4, Proposition~5, and Corollary~6 of~\cite{HoweLauter2003}),
but for primes in this range no such improvements are known.

Fifteen years ago, van der Geer and van der Vlugt~\cite{GeerVlugt2000}
published a table of the best upper and lower bounds on $N_q(g)$ known at the
time, for $g\le 50$ and for $q$ ranging over small powers of $2$ and~$3$. They 
regularly updated their tables and posted the updates on van der Geer's
website. Ten years after the first publication of these tables, the website
\url{manypoints.org} was created by van der Geer, Lauter, Ritzenthaler, and the
author (with technical assistance from Geerit Oomens). The \texttt{manypoints} 
tables include results for many more prime powers $q$ than were 
in~\cite{GeerVlugt2000}: namely, the primes less than~$100$, the prime powers 
$p^i$ for $p<20$ and $i\le 5$, and the powers of $2$ up to~$2^7$. 

The upper bounds presented in the \texttt{manypoints} tables come from a wide 
variety of sources and techniques, as is explained in the introduction 
to~\cite{HoweLauter2012}.  But lower bounds for $N_q(g)$ are generally obtained
by producing examples of curves with many points, and for most $q$ and $g$
no-one has done thorough searches for such curves.

For $g=1$, the value of $N_q(g)$ is determined by a result of
Deuring~\cite{Deuring1941} (see~\cite{Waterhouse1969}*{Theorem~4.1, p.~536}).
For $g=2$, the value of $N_q(g)$ is given by a result of
Serre~\cites{Serre1983a,Serre1983b,Serre1984} 
(see also~\cite{HoweNartEtAl2009}). As is explained 
in~\cite{LachaudRitzenthalerEtAl2010}, there is no easy formula known 
for~$N_q(3)$, but for all $q$ in the \texttt{manypoints} tables the value has
been computed; the introduction to~\cite{Mestre2010} gives a good summary of
the techniques that have been used to find genus-$3$ curves attaining the 
maximum number of points.

This leads us to the case $g=4$.  In an earlier paper~\cite{Howe2012}, we 
obtained new upper and lower bounds on $N_q(4)$ for those prime powers $q<100$
for which the exact value had not been known. The strategy we used for small 
$q$ sometimes required us to search through families of curves to determine 
whether any members of the family had many points.  Such strategies do not 
scale well with the size of the base field.

In this paper, we take a different tack.  Instead of performing exhaustive
searches to try to find curves with point-counts as close as possible to the 
Weil--Serre bound, we will exhibit an \emph{efficient} algorithm that we expect
will produce curves whose point-counts are \emph{reasonably close} to the
Weil--Serre bound. In particular, heuristic arguments suggest that for
all sufficiently large primes and for most prime powers~$q$, our 
Algorithm~\ref{A:genus4} will produce a genus-$4$ curve over $\Fq$ whose number
of points is within $4$ of the Weil bound in time $\Otilde(q^{3/4})$.

In Section~\ref{S:family} we give Algorithm~\ref{A:construction}, which 
attempts to produce genus-$4$ curves over $\Fq$ whose Jacobians split (up to a
small isogeny) as a product of a given genus-$2$ Jacobian and two elliptic 
curves from a given list. The algorithm takes as input a hyperelliptic curve 
that can be written $y^2 = f_1 f_2$ for two cubic polynomials $f_1$ and $f_2$,
and two lists $\calL_1$ and $\calL_2$ of elliptic curves that are `compatible'
(in a sense defined in Section~\ref{S:family}) with the polynomials $f_1$ 
and~$f_2$.  Heuristically, we expect the algorithm to succeed with probability
proportional to $\#\calL_1\#\calL_2/q$, and to run in time linear in $\#\calL_1$
and polynomial in~$\log q$.

The \emph{defect} of a genus-$g$ curve $C$ over $\Fq$ is the difference
between the Weil--Serre bound and the number of points on $C$:
\[
\defect C = (q + 1 + g\lfloor 2\sqrt{q}\rfloor) - \#C(\Fq).
\]
In Section~\ref{S:defect}, we explain how Algorithm~\ref{A:construction} can be
used to produce genus-$4$ covers $D$ of genus-$2$ curves $C$ such that the 
defect of $D$ is not much larger than the defect of $C$.

In Section~\ref{S:Hayashida}, which may be of independent interest, we
reinterpret some results of Hayashida having to do with principal polarizations
on the square of an elliptic curve with complex multiplication.  We use these
results in Section~\ref{S:genus2} to show that for many primes $q$ there are
many genus-$2$ curves of small defect over $\FF_q$, and we give an algorithm 
for finding them.

In Section~\ref{S:genus4} we put all of the pieces together and present
Algorithm~\ref{A:genus4}, which, as we noted earlier, can be used to quickly
find curves of genus $4$ with small defect.  In Section~\ref{S:results} we
show how Algorithm~\ref{A:genus4} fares in actual practice.  

\subsection*{Acknowledgments}
The author is grateful to John Voight for sharing a draft of his forthcoming
book~\cite{Voight} and for providing the argument involving quaternion algebras
that appears in the proof of Theorem~\ref{T:whenitworks}.

\section{A family of genus-\texorpdfstring{$4$}{4} curves covering a 
         genus-\texorpdfstring{$2$}{2} curve}
\label{S:family}

Let $k$ be a finite field of odd characteristic and let $C$ be a genus-$2$
curve over~$k$, given by a model $y^2 = f$ where $f\in k[x]$ is a separable 
polynomial of degree~$6$.  Suppose $f$ can be written $f = f_1 f_2$, where 
$f_1$ and $f_2$ are cubic polynomials. We will associate to this factorization 
of $f$ a $1$-parameter family of genus-$4$ curves over $k$ that are double
covers of $C$ and whose Jacobians are isogenous to the product of the Jacobian 
of $C$ with two (variable) elliptic curves.

For every $a\in \PP^1(k)$ with $a\ne\infty$ and $f(a)\ne 0$, let $D_a$ be the
curve defined by the pair of equations
\begin{align*}
w^2 &= (x-a) f_1\\
z^2 &= (x-a) f_2.
\end{align*}
For $i=1$ and $i=2$ let $h_i$ be the polynomial $ x^3 f_i(1/x+a)$ and let 
$E_{a,i}$ be the elliptic curve $y^2 = h_i$, so that $E_{a,i}$ is isomorphic
to the genus-$1$ curve $y^2 = (x-a)f_i$.  To handle the case $a=\infty$, 
we let $D_\infty$ be the curve defined by $w^2 = f_1, z^2 = f_2$, and for each 
$i$ we let $E_{\infty,i}$ be the elliptic curve $y^2 = f_i$.  If we have call
to make the dependence of $D_a$ on $f_1$ and $f_2$ explicit, we will write 
$D_a(f_1,f_2)$.

\begin{theorem}
\label{T:basic4}
For each $a\in\PP^1(k)$ such that $f(a)\ne 0$ the curve $D_a$ has genus~$4$,
and there are isogenies
\begin{align*}
\varphi&\colon \Jac D_a \to (\Jac C)\times E_{a,1}\times E_{a,2}\\
\psi   &\colon (\Jac C)\times E_{a,1}\times E_{a,2} \to \Jac D_a
\end{align*}
such that $\varphi\circ\psi$ and $\psi\circ\varphi$ are multiplication-by-$2$.
\end{theorem}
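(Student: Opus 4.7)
The plan is to exhibit $D_a$ as a Klein four-group Galois cover of $\PP^1$ whose three intermediate quotients are precisely $C$, $E_{a,1}$, and $E_{a,2}$, and then to deduce the genus and both isogenies from the standard Jacobian decomposition for such covers.

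I would begin by writing down three commuting involutions of $D_a$: $\sigma_1\colon(x,w,z)\mapsto(x,-w,z)$, $\sigma_2\colon(x,w,z)\mapsto(x,w,-z)$, and $\sigma_3=\sigma_1\sigma_2$. Together with the identity these form a group $V\cong\ZZ/2\times\ZZ/2$ acting over the $x$-line $\PP^1$. A direct inspection of invariants identifies the three quotients: $D_a/\langle\sigma_1\rangle$ is the curve $z^2=(x-a)f_2$, which is $E_{a,2}$; symmetrically $D_a/\langle\sigma_2\rangle$ is $E_{a,1}$; and $D_a/\langle\sigma_3\rangle$ is $C$, witnessed by the $\sigma_3$-invariant $y:=wz/(x-a)$, which satisfies $y^2=f_1f_2=f$. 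The case $a=\infty$ is handled identically after the obvious change of variable.

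To establish smoothness and compute $g(D_a)=4$, I would carry out a short local analysis. Since $f(a)\ne 0$ and $f$ is separable, $f_1$ and $f_2$ are separable and coprime to $x-a$; away from $x=a$ the two defining equations cut out a smooth curve, and at $x=a$ the affine model has an ordinary double point (locally, $w^2/f_1(a) = z^2/f_2(a) = x-a$ factors into two transverse lines) that resolves to two smooth points on the normalization $D_a$. Riemann--Hurwitz applied to the degree-$2$ map $D_a\to C$ obtained by adjoining $\sqrt{(x-a)f_1}$ then yields the genus: the divisor of $f_1$ on $C$ is twice the sum of the three Weierstrass points above the roots of $f_1$, so the branch locus reduces to the two points above $x=a$, giving $2g(D_a)-2 = 2(2g(C)-2)+2 = 6$.

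For the isogenies, set $\varphi=(\pi_{C*},\pi_{1*},\pi_{2*})$ and $\psi=\pi_C^* + \pi_1^* + \pi_2^*$, where the $\pi$'s are the three quotient maps. The relation $\psi\varphi = 2$ on $\Jac D_a$ follows from the standard formula $\pi_i^*\pi_{i*} = 1+\sigma_i$ combined with $1+\sigma_1+\sigma_2+\sigma_3 = 0$ in $\End(\Jac D_a)$; the latter identity holds because the sum equals $\pi^*\pi_*$ for the $V$-quotient map $\pi\colon D_a\to\PP^1$ and therefore factors through $\Jac\PP^1 = 0$. For $\varphi\psi$, the diagonal entries are $\deg\pi_i = 2$, while each off-diagonal entry $\pi_{i*}\pi_j^*$ with $i\ne j$ equals $p_i^*\,p_{j*}$ (where $p_i$ and $p_j$ are the projections of the intermediate quotients to $\PP^1$) and so again factors through $\Jac\PP^1 = 0$. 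The main subtlety is the smoothness verification at the shared ramification point $x=a$ and at infinity; once those are handled, the Klein-four decomposition produces both isogenies essentially automatically.
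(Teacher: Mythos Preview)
Your proposal is correct and follows essentially the same route as the paper: exhibit $D_a$ as a $V_4$-cover of $\PP^1$ with intermediate quotients $C$, $E_{a,1}$, $E_{a,2}$ (via the invariant $y=wz/(x-a)$ for $C$), and build $\varphi,\psi$ from pushforwards and pullbacks. You supply more detail than the paper does---an explicit Riemann--Hurwitz genus count and the group-algebra identity $1+\sigma_1+\sigma_2+\sigma_3=0$ on $\Jac D_a$ together with the base-change identity $\pi_{i*}\pi_j^*=p_i^*p_{j*}$ to verify both compositions---whereas the paper just checks $\varphi\circ\psi=2$ factorwise and deduces $\psi\circ\varphi=2$ formally.
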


\begin{proof}
Let $K = k(x)$, so that the function field of $D_a$ is $K(w,z)$.  Since
\[
\left(\frac{wz}{x-a}\right)^2 = f_1 f_2 = f,
\]
we see that $K(wz/(x-a))$ is isomorphic to the function field of $C$. The 
diagram of Galois field extensions on the left then leads to the diagram of 
curves on the right:
\[
\xymatrix{
                  & K(w,z)\ar@{-}[dl]_2\ar@{-}[dr]^2 &                   & &                  & D_a\ar[dl]_2\ar[dr]^2   &                 \\
K(w)\ar@{-}[dr]^2 & K(wz) \ar@{-}[d]^2 \ar@{-}[u]_2  & K(z)\ar@{-}[dl]_2 & & E_{a,1}\ar[dr]^2 & C \ar@{<-}[u]_2\ar[d]^2 & E_{a,2}\ar[dl]_2\\
                  & K(x)                             &                   & &                  & \PP^1.                  &                 \\
}
\]
The Galois group of $D_a$ over $\PP^1$ is of course the Klein group~$V_4$.  The 
images of the pullback maps from $\Jac C$, $E_{a,1}$, and $E_{a,2}$ to 
$\Jac D_a$ are subvarieties of $\Jac D_a$ that have $0$-dimensional pairwise 
intersections, because a different subgroup of $V_4$ acts trivially on each of 
the three subvarieties. Therefore, the three pullback maps piece together to 
give an isogeny $\psi\colon (\Jac C)\times E_{a,1}\times E_{a,2}\to \Jac D_a$, 
and the pushforwards give an isogeny $\varphi$ in the other direction. On each 
factor, the composition $\varphi\circ\psi$ is multiplication by~$2$, so
$\varphi\circ\psi = 2.$  The composition in the other order must then also be
multiplication by~$2$.
\end{proof}

\begin{definition}
\label{D:compatible}
Let $f\in k[x]$ be a separable cubic.  We say that an elliptic curve $E$ over
$k$ is \emph{compatible} with $f$ if $E$ is isomorphic to the curve $y^2 = cf$
for some nonzero $c\in k$ or if $E$ is isomorphic to the curve 
$y^2 = c x^3 f(1/x + a)$ for some $a,c\in k$ with $c\ne 0$.
\end{definition}

\begin{lemma}
An elliptic curve $E$ over $k$ is compatible with a separable cubic $f\in k[x]$
if and only if $E$ and the elliptic curve $y^2 = f$ have the same number of
$k$-rational $2$-torsion points.
\end{lemma}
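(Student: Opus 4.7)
The forward direction is a direct calculation. An elliptic curve $y^{2} = g$ with $g$ a separable cubic has $1 + \#\{\text{rational roots of }g\}$ rational $2$-torsion points. If $E \cong y^{2} = cf$ then $cf$ has the same roots as $f$, so the counts agree. If $E \cong y^{2} = cx^{3}f(1/x+a)$ then $f(a) \neq 0$ is forced (otherwise the right-hand side has degree less than $3$ and fails to define an elliptic curve); writing $f = \prod(x - r_{i})$, the polynomial $x^{3}f(1/x+a) = \prod(1 + (a - r_{i})x)$ has roots $1/(r_{i} - a)$, and $r_{i} \mapsto 1/(r_{i} - a)$ is a $\mathrm{Gal}(\bar k/k)$-equivariant bijection, so again the counts agree.

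For the converse the plan is to reinterpret the problem geometrically. A separable cubic model $y^{2} = g$ realizes an elliptic curve, up to quadratic twist, as the double cover of $\PP^{1}$ branched at $B_{g} := \{\text{roots of }g\} \cup \{\infty\}$. Likewise, for $a \in \PP^{1}(k)$ with $f(a) \neq 0$ (allowing $a = \infty$, which recovers the first case), the genus-$1$ curve $y^{2} = \lambda(x-a)f(x)$ is the double cover branched at $\{a\} \cup \{\text{roots of }f\}$, and moving its rational Weierstrass point above $a$ to infinity gives the cubic model $y^{2} = \lambda x^{3}f(1/x+a)$. So $E$ is compatible with $f$ if and only if $E$ arises, up to quadratic twist, as the double cover of $\PP^{1}$ branched at $\{\text{roots of }f\} \cup \{a\}$ for some $a \in \PP^{1}(k)$.

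Assuming $E$ has the same rational $2$-torsion count as $y^{2} = f$, the branch sets $B_{g}$ and $B_{f} := \{\text{roots of }f\}\cup\{\infty\}$ have the same $\mathrm{Gal}(\bar k/k)$-orbit structure on their four elements. I then aim to produce $\phi \in \mathrm{PGL}_{2}(k)$ sending $B_{g}$ to $\{\text{roots of }f\} \cup \{a\}$ for some $a \in \PP^{1}(k)$; together with an appropriate choice of twist this exhibits $E$ as $y^{2} = cx^{3}f(1/x+a)$ (or $y^{2} = cf$ if $a = \infty$). I construct $\phi$ by cases on $\#E[2](k)$: when $f$ splits, both branch sets are four rational points and the $3$-transitivity of $\mathrm{PGL}_{2}(k)$ on $\PP^{1}(k)$ yields $\phi$ directly; when $f$ has one rational root and an irreducible quadratic factor, $B_{g}$ and $B_{f}$ each consist of two rational points and one conjugate pair, so I first find $\phi_{0}$ matching the conjugate pairs and then compose with an element of the anisotropic torus pointwise stabilizing $\{r_{1}, r_{2}\}$ (which acts transitively on $\PP^{1}(k)$) to send a chosen rational point onto the rational root of $f$; and when $f$ is irreducible, a single cohomological argument shows the two Galois orbits of size $3$ lie in the same $\mathrm{PGL}_{2}(k)$-orbit.

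The main obstacle is verifying the existence of $\phi$ in each case. This reduces to showing that the classifying sets $H^{1}(\mathrm{Gal}(\bar k/k), S_{n})$ and the relevant group actions behave as claimed; because the absolute Galois group of a finite field is procyclic, these cohomology sets correspond to conjugacy classes in $S_{n}$, and the required transitivity assertions follow from a direct orbit count, making the case analysis essentially mechanical.
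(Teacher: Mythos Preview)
Your proof is correct, and the underlying idea---that compatibility is governed by the $\mathrm{PGL}_2(k)$-orbit of the Weierstrass configuration---is the same as the paper's. The execution differs: you work with the four-point branch sets $B_g$ and $\{\text{roots of }f\}\cup\{a\}$ and do a three-way case analysis (3-transitivity on rational points; conjugate pairs plus the anisotropic torus; degree-$3$ orbits via cohomology), whereas the paper handles all cases at once by observing that two separable cubics with the same factorization pattern have root sets in the same $\mathrm{PGL}_2(k)$-orbit. Since $\mathrm{PGL}_2$ acts simply $3$-transitively on $\PP^1$, the unique $\phi\in\mathrm{PGL}_2(\bar k)$ sending the roots of $g$ (in a Galois-compatible ordering) to the roots of $f$ is automatically $k$-rational, and $a:=\phi(\infty)$ falls out for free. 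Your route is sound and your transitivity claims all check out (the orbit counts for degree-$2$ and degree-$3$ closed points match exactly, and the non-split torus of order $q+1$ does act simply transitively on $\PP^1(\FF_q)$), but the paper's three-point argument avoids the case split entirely and is shorter.
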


\begin{proof}
Write $E$ as $y^2 = g$ for some separable cubic~$g$.   Then $E$ will have the
same number of rational $2$-torsion points as $y^2 = f$ if and only if the 
degrees of the irreducible factors of $f$ and of $g$ are equal. Likewise, these
degrees will be equal if and only if there is a linear fractional transformation
of $\PP^1_k$ that takes the roots of $f$ to the roots of~$g$.

Suppose there is such a linear fractional transformation.  If it is of
the form $x\mapsto b x + d$, then $E$ is isomorphic to $y^2 = cf$ for some $c$.
If it is of the form $x \mapsto (bx + d)/(x - a)$, then $E$ is isomorphic
to $y^2 = c x^3 f(1/x + a)$ for some $c$.  

Conversely, if $E$ is isomorphic to $y^2 = cf$ or $y^2 = c x^3 f(1/x + a)$,
then clearly the factorization patterns of $f$ and $g$ are equal, so $E$
and the curve $y^2 = f$ have the same number of rational $2$-torsion points.
\end{proof}

\begin{notation}
Let $f\in k[x]$ be a separable cubic.  We let $n(f)$ denote the number of 
automorphisms of $\PP^1_k$ that permute the roots of~$f$.
\end{notation}

It is easy to see that $n(f)$ is either $3$, $2$, or $6$, depending on whether 
$f$ has $0$, $1$, or $3$ rational roots.

\begin{theorem}
\label{T:eachE}
Let $C$ and $f = f_1 f_2$ be as above.  Let $S$ be the set of roots of $f$ 
in~$k$, and let $T$ be the set $\{j(E)\}$, where $E$ ranges over all elliptic 
curves over $k$ compatible with~$f_1$.  The map from $\PP^1(k)\setminus S$ to
$k$ that sends $a$ to $j(E_{a,1})$ has image contained in $T$.  With at most 
$5$ exceptions, every element of $T$ has exactly $n(f_1)$ preimages\textup{;} 
the exceptions have fewer than $n(f_1)$ preimages.
\end{theorem}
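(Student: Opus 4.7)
That $j(E_{a,1}) \in T$ is immediate from the definition of compatibility, since $E_{a,1}$ is by construction $y^2 = x^3 f_1(1/x+a)$ (the definition with $c=1$). For the preimage count, the plan is to treat $\varphi\colon a \mapsto j(E_{a,1})$ as a rational map $\PP^1_k \to \PP^1_k$. Because $E_{a,1}$ is birational to $y^2 = (x-a)f_1(x)$, its $j$-invariant depends only on the $PGL_2(\bar k)$-equivalence class of the $4$-point set $\{\alpha_1,\alpha_2,\alpha_3,a\}$, where $\alpha_i$ are the roots of $f_1$ in $\bar k$. Via the cross-ratio $\lambda(a)=(\alpha_1,\alpha_2;\alpha_3,a)$, $\varphi$ factors as $j\circ\lambda$, where $\lambda$ has degree $1$ in $a$ and the classical $j(\lambda)$ has degree $6$; hence $\varphi$ has degree $6$.

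Next I would introduce the stabilizer $\Lambda\le PGL_2$ of $\{\alpha_1,\alpha_2,\alpha_3\}$, so that $\Lambda(\bar k)\cong S_3$ and $|\Lambda(k)|=n(f_1)$. Each $\sigma\in\Lambda$ sends $\{\alpha_i,a\}$ to $\{\alpha_i,\sigma(a)\}$, so $\varphi$ is $\Lambda$-invariant. In the generic case---$j_0\in T$ with a $k$-rational preimage $a_0\in\PP^1(k)\setminus S$ of trivial $\Lambda(\bar k)$-stabilizer, and $\Lambda(k)\cdot a_0$ disjoint from $S$---the orbit $\Lambda(k)\cdot a_0$ produces $n(f_1)$ distinct $k$-rational preimages in $\PP^1(k)\setminus S$. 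Galois descent then rules out any others: any further preimage would be $\sigma a_0$ for some $\sigma\in\Lambda(\bar k)\setminus\Lambda(k)$, and the condition $\tau(\sigma)\cdot a_0=\sigma\cdot a_0$ for every $\tau\in\text{Gal}(\bar k/k)$---given that $a_0\in\PP^1(k)$ has trivial stabilizer---forces $\tau(\sigma)=\sigma$, hence $\sigma\in\Lambda(k)$, a contradiction.

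The exceptional $j_0\in T$ arise from at most two sources. First, the branch values of $\varphi$ over $\bar k$ are $j=0$ (from fixed points of $3$-cycles in $\Lambda(\bar k)$) and $j=1728$ (from transposition fixed points); the third classical branch value $j=\infty$ comes from the degenerate $4$-tuples $a\in\{\alpha_i\}$ and lies outside $T$. A short case analysis in each of the three possibilities for $n(f_1)$ shows that whenever $j=0$ or $j=1728$ lies in $T$ at all, the $k$-rational count drops strictly below $n(f_1)$, yielding at most $2$ exceptions. Second, $\Lambda(k)\cdot a_0$ may meet $S$: the roots of $f_1$ in $S$ cause no harm since they map to $j=\infty\notin T$, but each of the at most $3$ roots of $f_2$ lying in $k$ contributes at most one $j$-value in $T$ whose preimage count is reduced by the excision of a point of $S$. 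All told, there are at most $2+3=5$ exceptional $j_0$, each with fewer than $n(f_1)$ preimages.

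I expect the delicate step to be the Galois-descent argument, specifically verifying that when the $\Lambda(\bar k)$-stabilizer of $a_0$ is non-trivial and not contained in $\Lambda(k)$, no ``twisted'' cocycle produces an extra $k$-rational preimage that would push the count above $n(f_1)$.
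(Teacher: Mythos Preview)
Your argument is correct, and arrives at the same conclusion by essentially the same mechanism as the paper, but the paper's phrasing is considerably more direct and avoids the machinery you set up. Rather than factoring $\varphi$ through the cross-ratio and then running a Galois-descent argument on $\Lambda(\bar k)$-orbits, the paper fixes a target $j_0 = j(E)\in T$, writes $E$ as $y^2 = g$ for a cubic $g$ with the same factorization pattern as $f_1$, and observes that $a\in\PP^1(k)\setminus S$ is a preimage of $j_0$ if and only if there is an automorphism of $\PP^1_k$ taking the roots of $g$ to the roots of $f_1$ and sending $\infty$ to $a$. The set of such $k$-rational automorphisms is a single $\Lambda(k)$-coset, hence has exactly $n(f_1)$ elements; they send $\infty$ to distinct points of $\PP^1(k)$ unless $E$ has extra automorphisms ($j=0$ or $1728$), and these points lie in $\PP^1(k)\setminus S$ unless one happens to be a root of $f_2$. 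This bypasses both the degree computation and the descent step.

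Your closing worry about ``twisted cocycles'' producing more than $n(f_1)$ rational preimages is unfounded, and in fact your own argument already handles it. The point you may be missing is that every $j_0\in T$ automatically has a $k$-rational preimage $a_0$: this is built into Definition~\ref{D:compatible}, since a compatible $E$ is by definition a twist of some $E_{a,1}$ with $a\in\PP^1(k)$. Once you have a rational $a_0$, its $\Lambda(\bar k)$-stabilizer is necessarily Galois-stable, and your cocycle computation then shows the rational fiber is exactly $\Lambda(k)\cdot a_0$, of size at most $n(f_1)$. When the stabilizer is nontrivial (the $j=0,1728$ cases), the orbit $\Lambda(k)\cdot a_0$ is strictly smaller than $n(f_1)$, which is precisely what makes these values exceptional; nothing can push the count above $n(f_1)$. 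So the ``delicate step'' you flag is in fact routine, and the paper's formulation makes this transparent by never needing to single out a base point at all.
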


\begin{proof}
The image of the map lies in $T$ by the very definition of compatibility.

Let $E$ be an elliptic curve compatible with $f_1$, and suppose $g\in k[x]$ is
a cubic polynomial such that $E$ is isomorphic to the curve $y^2 = g$. An 
element $a\in \PP^1(k)\setminus S$ is a preimage for $j(E)$ if and only if
there is an automorphism of $\PP^1$ that takes the roots of $g$ to the roots of
$f_1$ and that sends $\infty$ to~$a$. The number of automorphisms of $\PP^1$ 
taking the roots of $g$ to the roots of $f_1$ is equal to $n(f_1)$. These
automorphisms will take $\infty$ to distinct elements of $\PP^1$ unless $E$ has
more than $2$ automorphisms; that is, unless $j=0$ or $j=1728$.  These distinct 
elements will all lie in $\PP^1(k)\setminus S$, unless $j$ is the $j$-invariant
of one of the (at most three) curves $y^2 = (x-a)f_1$, for $a$ a root of $f_2$.
Thus, all but at most five values of $j$ in $T$ will have exactly $n(f_1)$ 
preimages.
\end{proof}

Theorem~\ref{T:eachE} says that the $j$-invariants of the curves $E_{a,1}$ that
we get from a given splitting $f = f_1 f_2$ are essentially distributed
uniformly at random from among the $j$-invariants of the elliptic curves over 
$k$ compatible with~$f_1$. Also, if a given elliptic curve is obtained as 
$E_{a,1}$ for a given splitting $f = f_1 f_2$, then its quadratic twist is 
obtained as $E_{a,1}$ for the \emph{same value of $a$} from the splitting 
$f = (c f_1) ( f_2/c)$, where $c$ is a nonsquare in~$k$. These observations 
lead us to the following heuristic.

\begin{heuristic}
\label{H:Es}
For a given curve $C$ and polynomials $f_1, f_2$ as above, we will model the 
pairs $(E_{a,1}, E_{a,2})$ as being chosen uniformly at random from among all
pairs $(E_1,E_2)$ of elliptic curves over $k$ compatible with $f_1$ and $f_2$, 
respectively.
\end{heuristic}

Now let us use the construction implicit in Theorem~\ref{T:basic4} to create an
algorithm for producing genus-$4$ double covers $D_a$ of a genus-$2$ curve $C$,
as above, where the curves $E_{a,1}$ and $E_{a,2}$ lie in a prescribed set of 
elliptic curves.

\goodbreak
\begin{algorithm}
\label{A:construction}
\begin{algtop}
\algin  An odd prime power $q$, coprime cubic polynomials $f_1$ and $f_2$
        in $\Fq[x]$, and two lists $\calL_1$ and $\calL_2$ of elliptic curves 
        over $\Fq$ compatible with $f_1$ and $f_2$, respectively.
\algout Either the word ``failure'', or a value of $a\in \PP^1(\Fq)$ such that
        $f_1(a)f_2(a)\ne 0$ and such that the elliptic curves $E_{a,1}$ and 
        $E_{a,2}$ lie in $\calL_1$ and $\calL_2$, respectively.
\end{algtop}
\begin{alglist}
\item Compute the degree-$6$ rational function $j_1\in \Fq(t)$ such that
      the $j$-invariant of the genus-$1$ curve $y^2 = (x-a)f_1$ is $j_1(a)$.
\item For every $E_1 \in \calL_1$ do:
\begin{algsublist}
\item Compute the (at most $6$) values $a\in \PP^1(\Fq)$ with $j_1(a) = j(E_1)$
      and with $f_2(a)\ne 0$.
\item For each of these values, check whether $E_{a,1}$ lies in $\calL_1$
      and $E_{a,2}$ lies in $\calL_2$.  If so, output $a$ and stop.
\end{algsublist}
\item Output ``failure''.
\end{alglist}
\end{algorithm}

Note that if there does exist an $a$ such that $E_{a,1}$ and $E_{a,2}$ lie in 
$\calL_1$ and $\calL_2$, Algorithm~\ref{A:construction} will find it.  Also, it
is clear that there are positive constants $c_1$ and $c_2$ such that 
Algorithm~\ref{A:construction} runs in probabilistic time at most 
$c_1 \#\calL_1 (\log q)^{c_2}$.

\begin{expectation}
\label{HE:success}
Let $q$ be an odd prime power.  Let $\calL_1$ and $\calL_2$ be two nonempty
lists of elliptic curves over $\Fq$ such that all the curves in each list have
the same number of $2$-torsion points, and suppose 
$\#\calL_1 \, \#\calL_2 \ll q^{3/4}$. The probability that
Algorithm~\textup{\ref{A:construction}} will succeed on a randomly chosen pair
$(f_1,f_2)$ of cubic polynomials in $\Fq[x]$ compatible with the curves in 
$\calL_1$ and $\calL_2$ is approximately
\[
n(f_1) n(f_2) \ \frac{\#\calL_1\, \#\calL_2}{4 q}.
\]
\end{expectation}

\begin{proof}[Justification]
Using Heuristic~\ref{H:Es}, we view the pairs $(E_{1,a}, E_{2,a})$ as being 
chosen uniformly at random from among the ordered pairs of elliptic curves 
compatible with $f_1$ and~$f_2$. There are approximately $4q^2/(n(f_1)n(f_2))$
such pairs of compatible curves.  The probability that none of the pairs 
$(E_{1,a}, E_{2,a})$ will lie in the set of $\#\calL_1\,\#\calL_2$ pairs we are
hoping to find is then given by
\[
\left(1 - n(f_1) n(f_2) \frac{\#\calL_1\,\#\calL_2}{4q^2}\right)^k,
\]
where $k\approx q$ is the number of elements of $\PP^1(k)$ that are not roots
of $f_1 f_2$.  Since $\#\calL_1 \,\# \calL_2 \ll q^{3/4}$, this probability of
failure is approximately 
\[ 1 - n(f_1) n(f_2) \frac{\#\calL_1\,\#\calL_2}{4q},\]
and the probability of success is as stated in the Expectation.
\end{proof}

\section{Change in defect}
\label{S:defect}

Recall that the \emph{defect} of a genus-$g$ curve $C$ over a finite field $k$
is the difference between $\#C(k)$ and the Weil--Serre upper bound for genus-$g$
curves over~$k$.  In this section, we consider using 
Algorithm~\ref{A:construction} to produce genus-$4$ curves whose defect is not
much more than that of the genus-$2$ curves that they cover.

\begin{expectation}
\label{HE:norootcubics}
Let $C$ be a genus-$2$ curve over a finite prime field~$\Fq$ that can be 
written in the form $y^2 = f_1 f_2$, where $f_1$ and $f_2$ are irreducible 
cubic polynomials in~$\Fq[x]$. Let $m = \lfloor 2\sqrt{q}\rfloor$. If $m$ is
even set $k = 1$\textup{;} if $m$ is odd set $k = 2$. Up to powers of 
$\log \log q$, the probability that there is a double cover $D$ of~$C$, of the
type described in Section~\textup{\ref{S:family}}, such that the defect of $D$ 
satisfies
\[\defect D \le \defect C + 2k\]
is approximately $q^{-1/2}$.
\end{expectation}

\begin{proof}[Justification]
We start with some comments about the number of elliptic curves with a given 
small defect.  For elliptic curves, the general Weil--Serre bound specializes
into the Hasse bound: The maximal number of points on an elliptic curve over 
$\Fq$ is $q + 1 + m$, where $m = \lfloor 2\sqrt{q} \rfloor$. If $m$ is coprime 
to $q$, then there do exist elliptic curves over $\Fq$ with this number of
points (see~\cite{Waterhouse1969}*{Theorem~4.1, p.~426}).  More generally, 
if $t$ is any integer with $\abs{t}\le \abs{m}$ and $(t,q)=1$, then the number of 
elliptic curves over $\Fq$ with $q + 1 - t$ points is equal to the Kronecker
class number $H(t^2 - 4q)$; see \cite{Schoof1987}*{Theorem~4.6, pp.~194--195}.

Let us consider the number of elliptic curves over $\Fq$ with $q + 1 + m - k$ 
points, where $k$ is as in the statement of the Expectation.  Let $t = k - m$; 
it is easy to check that when $q$ is prime, $t$ is coprime to $q$, so the 
number of elliptic curves of trace $t$ is then equal to the Kronecker class
number $H(\Delta)$ of the discriminant $\Delta = t^2 - 4q$.  If the Generalized
Riemann Hypothesis is true, then up to factors of $\log\log{\abs{\Delta}}$, this 
class number is bounded below and above by $\sqrt{\abs{\Delta}}$.  (For fundamental
discriminants, this is~\cite{Littlewood1927}*{Theorem~1, p.~367}, and the 
result for general discriminants follows easily.)

If we write $m = 2\sqrt{q} - \eps$ with $0\le\eps < 1$, then
$t = k+\eps - 2\sqrt{q}$ and
\[
\Delta = (k + \eps)^2 - 4(k + \eps)\sqrt{q},
\]
so certainly $12\sqrt{q} > \abs{\Delta} > \sqrt{q}$.  Therefore, assuming GRH,
we expect that up to factors of $\log \log q$, the number of elliptic curves 
with defect $k$ is bounded below and above by $q^{1/4}$.

Now consider applying Algorithm~\ref{A:construction} to the irreducible cubics 
$f_1$ and $f_2$, taking the lists $\calL_1$ and $\calL_2$ to both be the set of 
elliptic curves of defect $k$.  (Note that the value of $k$ is chosen so that
the curves of defect $k$ have no rational $2$-torsion points, so they are 
compatible with $f_1$ and $f_2$.)

According to Heuristic Expectation~\ref{HE:success}, we expect the algorithm
to succeed with probability $(9/4) (\#\calL_1)^2 / q$.  We have just seen that
$\#\calL_1\sim q^{1/4}$, up to factors of $\log\log q$; therefore, we expect
success with probability $q^{-1/2}$, up to factors of $\log\log q$.  If we have
success, then the resulting curve $D_a$ will satisfy
\[ d(D_a) = d(C) + d(E_{a,1}) + d(E_{a,2}) = d(C) + 2k. \qedhere\]
\end{proof}

We expect a similar result for cubic polynomials $f_1$ and $f_2$ with other 
factorizations.  We will only explicitly state one.

\begin{expectation}
\label{HE:singlerootcubics}
Let $C$ be a genus-$2$ curve over a finite prime field~$\Fq$ that can be 
written in the form $y^2 = f_1 f_2$, where $f_1$ and $f_2$ are cubic 
polynomials in~$\Fq[x]$ each with exactly one rational root. Let
$m = \lfloor 2\sqrt{q}\rfloor$.  If $m$ is even set $k = 2$\textup{;} if $m$ is 
odd set $k = 1$. Up to powers of $\log \log q$, the probability that there is a 
double cover $D$ of $C$, of the type described in 
Section~\textup{\ref{S:family}}, such that the defect $d(D)$ of $D$ satisfies
\[d(D) \le d(C) + 2k\]
is approximately $q^{-1/2}$.
\end{expectation}

(Note that, compared to Heuristic Expectation~\ref{HE:norootcubics}, the values
of $k$ are assigned in the opposite way.)

The justification for this expectation is essentially the same as that for 
Heuristic Expectation~\ref{HE:norootcubics}.  The main difference is that now 
we are considering elliptic curves with even traces and hence even group orders,
but we do not want our sets $\calL_1$ and $\calL_2$ to include elliptic curves 
that have all of their $2$-torsion defined over the base field.  Fortunately, 
it is not hard to see (using, for instance, the theory of isogeny 
volcanoes~\cite{FouquetMorain2002}) that at least half of the curves of trace 
$t$ do not have all of their $2$-torsion points defined over the base field.

\section{Interlude on work by Hayashida}
\label{S:Hayashida}

In the late 1960s, Hayashida and Nishi studied genus-$2$ curves lying on
products of isogenous elliptic curves with complex multiplication. Their 
initial paper~\cite{HayashidaNishi1965} studied the general case, and was 
followed by a paper by Hayashida~\cite{Hayashida1968} that considered the 
special case of curves lying on $E\times E$, for $E$ with CM by a maximal 
order. In this section we will reinterpret Hayashida's work in terms of an
equivalence of categories that is reminiscent of both 
\begin{enumerate}
\item the equivalence of categories between supersingular elliptic curves
      and rank-$1$ right modules over a maximal order in a quaternion
      algebra~\cite{Kohel1996}, and
\item the bijection between supersingular abelian surfaces (given with
      an action of a maximal order in a quaternion algebra) and ``oriented
      maximal orders''~\cite{Ribet1989}*{\S 3}.
\end{enumerate}

First we will define a category of principal polarizations on the square of an 
elliptic curve.  Then we will show that this category is equivalent to the 
category of rank-$1$ right modules over an order in a certain quaternion 
algebra. Finally, we will show that there is an involution on the category such
that the orbits of isomorphism classes of objects correspond to ``good curves'' 
of genus~$2$ whose (unpolarized) Jacobian varieties are isomorphic to the 
square of the given elliptic curve.

\subsection{The category of principal polarizations on 
            \texorpdfstring{$E\times E$}{E x E}}
Let $E$ be an elliptic curve over an arbitrary field $k$, let $\calO$ be the
ring of ($k$-rational) endomorphisms of $E$, and suppose that $\calO$ is
isomorphic to the ring of integers of an imaginary quadratic field $K$ of
discriminant~$\Delta$.  Let $A$ be the abelian surface $E\times E$, and note
that the principal polarizations on $A$ are in bijection with the set
of positive definite unimodular Hermitian matrices in the matrix
ring~$M_2(\calO)$.  We denote complex conjugation (in $\calO$ and in $K$)
by $x\mapsto \bar{x}$.

Let $P\mapsto P'$ be the involution on $M_2(\calO)$ that sends a matrix
$P = \left[\begin{smallmatrix}a&b\\c&d\end{smallmatrix}\right]$ to
\[ P' = \begin{bmatrix}\phantom{-}\bar{d}&-\bar{c}\\-\bar{b}&\phantom{-}\bar{a}\end{bmatrix}.\]
Let $P\mapsto P^*$ denote the conjugate-transpose involution, and note that
$P^* P' = (\bar{\det P}) I$, where $I$ is the identity matrix.

We define a category $\pol E^2$ as follows: The objects of $\pol E^2$ are
positive definite unimodular Hermitian matrices in $M_2(\calO)$, that is,
principal polarizations on $E\times E$.  The set of morphisms from one object
$L$ to another object $M$ is defined to be
\[ \Hom(L,M) = \{ P\in M_2(\calO) \colon MP = P'L\}, \]
and composition of morphisms
\[ \Hom(M,N)\times \Hom(L,M) \to \Hom(L,N) \]
is given by sending $(Q,P)$ to $QP$.  It follows that $\pol E^2$ is a
preadditive category.  Note that if $P$ is a morphism from $L$ to $M$, and if 
we multiply both sides of the equality $MP = P'L$ by $P^*$, we find that 
\[ P^* M P = (\bar{\det P}) L. \]
Since $M$ and $L$ are both positive definite, we see that $\det P$ must be a
non-negative rational integer.

Let $I$ denote the identity matrix in $M_2(\calO)$.  We compute that
\[
\End I = \left\{ \begin{bmatrix}\alpha&-\bar{\beta}\\ 
                                \beta&\phantom{-}\bar{\alpha}\end{bmatrix}
                                \colon \alpha,\beta\in\calO \right\}. 
\]

\begin{theorem}
\label{T:EndI}
The ring $\End I$ is an order in a quaternion algebra 
$\HH = (\End I)\otimes\QQ$ over $\QQ$.  The algebra $\HH$ is ramified at
infinity, at the prime divisors of $\Delta$ that are congruent to $3$ 
modulo~$4$, and at $2$ if $\Delta$ is even but not congruent to $8$ 
modulo~$32$.  The reduced discriminant of $\End I$ is equal to the 
discriminant of $\calO$.
\end{theorem}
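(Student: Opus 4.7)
The plan is to handle the three claims in turn: that $\End I$ sits in a quaternion algebra, the ramification description of $\HH$, and the reduced-discriminant formula. I would first give $\HH$ an explicit quaternion presentation, then compute ramification by a local analysis using Hilbert symbols, and finally compute the reduced discriminant directly from the Gram matrix of the reduced trace form on $\End I$.

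For the first step, set $j = Q(0,1)$. A direct matrix multiplication shows $j^2 = -I$ and $j \cdot Q(\alpha, 0) = Q(\bar{\alpha}, 0) \cdot j$ for every $\alpha \in \calO$, so that $\HH = K \oplus K j$ as a left $K$-module, where $K = \calO \otimes \QQ$. This presents $\HH$ as a quaternion algebra over $\QQ$, with standard involution $Q(\alpha, \beta) \mapsto Q(\bar{\alpha}, -\beta)$ and reduced norm $Q(\alpha, \beta) \mapsto N_{K/\QQ}(\alpha) + N_{K/\QQ}(\beta)$. Writing $K = \QQ(\sqrt{-d})$, this identifies $\HH$ with the Hilbert-symbol algebra $\left(\frac{-d,\,-1}{\QQ}\right)$.

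For the ramification step, note that since $K \subset \HH$, the algebra splits at every place of $\QQ$ that splits in $K$. At any other place $v$, the cyclic-algebra criterion says that $\HH$ splits at $v$ if and only if $-1 \in N_{K_v/\QQ_v}(K_v^{\times})$. At infinity the norm from $\mathbf{C}$ to $\mathbf{R}$ is non-negative, so $\HH$ ramifies. At inert finite primes the norm group contains $\ZZ_p^{\times}$, so $\HH$ splits. At finite primes $p$ ramified in $K$, the criterion becomes whether $-1$ is represented by the form $x^2 + dy^2$ over $\QQ_p$. For odd $p \mid \Delta$, reducing modulo $p$ shows this happens exactly when $-1$ is a square in $\QQ_p^{\times}$, i.e.\ when $p \equiv 1 \pmod 4$; so $\HH$ ramifies at $p$ iff $p \equiv 3 \pmod 4$. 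For $p = 2$ ramified in $K$, a direct evaluation of the Hilbert symbol $(-d, -1)_2$, case-split on $d \bmod 8$ (three cases: $d \equiv 1 \pmod 4$, $d \equiv 2 \pmod 8$, $d \equiv 6 \pmod 8$), shows that $\HH$ ramifies at $2$ precisely when $\Delta \not\equiv 8 \pmod{32}$.

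For the reduced-discriminant step, let $\theta$ generate $\calO$ over $\ZZ$ and compute the reduced trace pairing $\langle x, y \rangle := \operatorname{trd}(x\bar{y})$ on the basis $\{Q(1,0), Q(\theta, 0), Q(0,1), Q(0,\theta)\}$ of $\End I$. Using the multiplication rule
\[ Q(\alpha_1,\beta_1)\,Q(\gamma,\delta) = Q\bigl(\alpha_1 \gamma - \bar{\beta}_1 \delta,\; \bar{\alpha}_1 \delta + \beta_1 \gamma\bigr) \]
together with the standard involution $Q(\alpha, \beta) \mapsto Q(\bar{\alpha}, -\beta)$, one obtains
\[ \langle Q(\alpha_1, \beta_1),\, Q(\alpha_2, \beta_2) \rangle = \operatorname{Tr}_{K/\QQ}(\alpha_1 \bar{\alpha}_2) + \operatorname{Tr}_{K/\QQ}(\beta_1 \bar{\beta}_2), \]
so the pairing splits as an orthogonal direct sum of two copies of the hermitian trace form on $\calO$. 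Each summand has Gram determinant $-\Delta$, so the full Gram matrix has determinant $\Delta^2$, and the reduced discriminant of $\End I$ equals $\abs{\Delta}$.

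The main obstacle will be the $2$-adic calculation: matching the value of $(-d, -1)_2$ to the condition $\Delta \equiv 8 \pmod{32}$ requires careful bookkeeping with the standard $2$-adic Hilbert-symbol formula. This is presumably where the quaternion-algebra argument attributed to Voight in the acknowledgments enters, reading off the local structure of $\End I$ more conceptually than through a brute Hilbert-symbol computation.
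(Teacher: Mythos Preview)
Your proposal is correct and follows essentially the same arc as the paper's proof: both present $\HH$ as $K \oplus Kj$ with $j^2=-1$ and $j\alpha = \bar\alpha j$, both reduce the ramification question to whether $-1$ is a local norm from $K_p$, and both handle odd ramified primes by the $p\equiv 1$ or $3 \pmod 4$ dichotomy. The differences are cosmetic. At $p=2$ the paper simply tabulates the eight quadratic extensions of $\QQ_2$ and records in which of them $-1$ is a norm, whereas you invoke the explicit $2$-adic Hilbert-symbol formula; these are the same computation in different clothing. For the reduced discriminant the paper argues that the trace dual of $\calObar[i]$ equals $\calAbar[i]$ with $\calA$ the trace dual of $\calO$, while you compute the Gram matrix of $\operatorname{trd}(x\bar y)$ and observe it block-diagonalizes as two copies of the trace form on~$\calO$; both yield $\Delta^2$ for the discriminant and hence $\lvert\Delta\rvert$ for the reduced discriminant. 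One small remark: the standard definition of the discriminant uses $\operatorname{trd}(xy)$ rather than $\operatorname{trd}(x\bar y)$, but since conjugation is a $\ZZ$-linear involution of the order the two Gram determinants agree up to sign, so your conclusion is unaffected.

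One correction to your closing speculation: the Voight argument acknowledged in the paper is \emph{not} used here. It appears later, in the proof of Theorem~\ref{T:whenitworks}, where strong approximation in the quaternion algebra is invoked to connect principal polarizations by chains of Richelot isogenies. The $2$-adic ramification analysis in the present theorem is handled by the elementary table, requiring nothing deeper than what you already sketched.
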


\begin{proof}
Set
\[ i = \begin{bmatrix}0 & -1\\ 1 & \phantom{-}0\end{bmatrix}\in \End I,\]
so that $i^2 = -1$.  Let $\calObar$ denote the image of $\calO$ in 
$M_2(\calO)$ under the twisted embedding
\[
a \mapsto \begin{bmatrix}a & 0 \\ 0 & \bar{a}\end{bmatrix}.
\]
Then $\End I = \calObar[i]$.  We write elements of $\calObar[i]$ as 
$\alpha + i\beta$, with $\alpha,\beta\in\calO$, and we note that 
$\alpha i = i\bar{\alpha}$ for all $\alpha\in\calO$.  Likewise, we let $\Kbar$
denote the twisted diagonal image of $K$ in $M_2(K)$, and we note that 
$\HH = \Kbar[i]$.  Clearly $\HH$ is a quaternion algebra, and clearly $\HH$ is
ramified at infinity.

To find the finite primes that ramify in $\HH$, we note that for all
$\alpha,\beta\in K$ we have
\begin{align*}
(\alpha + i\beta) (\bar{\alpha} - i\beta)
&=
\alpha\bar{\alpha} + i\beta\bar{\alpha} - \alpha i\beta - i\beta i \beta\\
&=
\alpha\bar{\alpha} + i\beta\bar{\alpha} - i\bar{\alpha}\beta - i^2\bar{\beta}\beta\\
&= 
N(\alpha) + N(\beta),
\end{align*}
where $N$ denotes the norm from $K$ to $\QQ$, so a nonzero $\alpha + i\beta$
has norm $0$ if and only if $N(\beta) = -N(\alpha)$.  Setting 
$\gamma = \beta/\alpha$, we see that $\HH$ is unramified at a prime $p$ if and 
only if there is an element $\gamma\in K_p = K\otimes\QQ_p$ such that
$\gamma\bar{\gamma} = -1$.

The norm map from $K_p^*$ to $\QQ_p^*$ is surjective on units for all primes 
$p$ that are unramified in $\calO$, so $\HH$ is unramified at these primes.  
We are left to consider the primes that are ramified in $\calO$; let $p$ be
such a prime.

If $p\equiv 1 \bmod 4$ then $\ZZ_p\subset\calO_p$ contains an element $e$ whose
square is $-1$ and that is fixed by complex conjugation, so $p$ is unramified 
in $\HH$.  On the other hand, if $p\equiv 3\bmod 4$ then no element of $K_p$ 
has norm $-1$, so $p$ is ramified in~$\HH$.

This leaves us with the case $p=2$.  We know that $\QQ_2^*/\QQ_2^{*2}$ is a
group of order~$8$, generated by the images of $-1$, $2$, and~$3$, and we 
calculate the following table:
\begin{center}
\begin{tabular}{rrccc}
\toprule
     && Description                 && Is $-1$ a norm in\\
$ n$ &&  of $\QQ_2(\sqrt{n})/\QQ_2$ && this extension?\\
\midrule
$ 1$ &&      split && yes \\
$-1$ &&   ramified &&  no \\
$ 2$ &&   ramified && yes \\
$-2$ &&   ramified &&  no \\
$ 3$ &&   ramified &&  no \\
$-3$ && unramified && yes \\
$ 6$ &&   ramified &&  no \\
$-6$ &&   ramified && yes \\
\bottomrule
\end{tabular}
\vskip 3ex
\end{center}
We see that $\HH$ is ramified at $2$ if and only if the $2$-adic extension
$K_2$ of $\QQ_2$ is isomorphic to $\QQ_2(\sqrt{n})$ with $n = -1$, $-2$, $3$,
or $6$. Summarizing, we see that if $\HH$ is ramified at $2$ then so is $K$,
and if $K$ is ramified at $2$ then so is $\HH$, unless the $2$-adic extension 
is isomorphic to $\QQ_2(\sqrt{n})$ with $n = 2 $ or $n = -6$.  This can be 
summarized even more briefly: if $\Delta$ is even, then $\HH$ is ramified at
$2$ unless $\Delta \equiv 8\bmod 32$.

Finally, a direct computation shows that the trace dual of $\calObar[i]$ is 
$\calAbar[i]$, where $\calA$ is the trace dual of $\calO$.  It follows that the
reduced discriminant of $\calObar[i]$ is $\Delta$.
\end{proof}

\subsection{An equivalence of categories}
We define a functor $F$ from the category $\pol E^2$ to the category of 
rank-$1$ projective right $\calObar[i]$-modules as follows: If $L$ is a 
positive definite unimodular matrix, we take $F(L)$ to be the right 
$(\End I)$-module $\Hom(I,L)$. If $P$ is a morphism from $L$ to $M$, we take 
$F(P)$ to be the morphism of right $\calObar[i]$-modules that takes an element 
$Q$ of $F(L) = \Hom(I,L)$ to the element $PQ$ of $F(M) = \Hom(I,M)$.

\begin{theorem}
\label{T:equiv}
Suppose $\Delta\not\equiv 0\bmod 8.$  Then the functor $F$ is an equivalence 
of categories.
\end{theorem}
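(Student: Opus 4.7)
The plan is to establish the equivalence by verifying full faithfulness and essential surjectivity. First I would work rationally (tensoring with $\QQ$), then descend to the integral order by a local analysis at each prime, and finally deduce essential surjectivity from a class-number comparison with Hayashida's count.

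For the rational picture, the identity $L' = L^{-1}$ (which is an elementary computation for $2\times 2$ positive-definite unimodular Hermitian matrices, since $\det L = 1$ forces the $(2,1)$-adjugate identity to coincide with the $(1,2)$-conjugate identity) exhibits $F(L)\otimes\QQ$ as the $(+1)$-eigenspace of the $\QQ$-linear involution $P\mapsto L^{-1}P'$ on $M_2(K)$. The element $I + L^{-1}$ visibly lies in $F(L)\otimes\QQ$ and has positive determinant (being the sum of two positive-definite Hermitian matrices), so $F(L)\otimes\QQ$ is a free right $\HH$-module of rank one. Using the multiplicativity $(AB)' = A'B'$ of the involution, left multiplication by a matrix $P\in M_2(K)$ carries $F(L)\otimes\QQ$ into $F(M)\otimes\QQ$ exactly when $MP = P'L$, so the natural map
\[
\Hom(L,M)\otimes\QQ \ \longrightarrow \ \Hom_\HH\!\bigl(F(L)\otimes\QQ,\,F(M)\otimes\QQ\bigr)
\]
sending $P$ to left-multiplication by $P$ is an isomorphism of $4$-dimensional $\QQ$-vector spaces, and $F$ therefore induces an equivalence between the $\QQ$-rational versions of the two categories.

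To upgrade this to the integral statement, I would verify locally at each rational prime $p$ that $F(L)\otimes\ZZ_p$ is a rank-one projective right $(\calObar[i]\otimes\ZZ_p)$-module and that every homomorphism between two such modules is left multiplication by a matrix in $M_2(\calO\otimes\ZZ_p)$. The key ingredient is that $\det L = 1$ forces $\det P \in \ZZ$ for every $P \in F(L)$, and more strongly that the reduced norm of the right ideal $F(L)$ of $\calObar[i]$ is $\ZZ$; locally this means $F(L)\otimes\ZZ_p$ is generated by an element of unit reduced norm, which is therefore invertible in $M_2(\calO\otimes\ZZ_p)$, and from that the left-multiplication description of all module maps follows by pulling back along this generator and using the already-established rational fullness. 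The hypothesis $\Delta\not\equiv 0\pmod 8$ enters only at the prime $p=2$, where, by comparing the $2$-adic part of the reduced discriminant of $\calObar[i]$ computed in Theorem~\ref{T:EndI} with the $2$-adic discriminant of $\HH$, it is exactly the condition that forces $\calObar[i]\otimes\ZZ_2$ to be Gorenstein (maximal when $\Delta$ is odd, and a hereditary order of small conductor otherwise), so that rank-one projective right modules are locally free. The local analysis at $p=2$ across all residues of $\Delta$ modulo $32$ allowed by the hypothesis is the main obstacle I anticipate.

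Finally, essential surjectivity reduces to a counting argument. Full faithfulness already makes the induced map on isomorphism classes injective, and both sets are finite: the number of isomorphism classes of principal polarizations on $E\times E$ is computed by Hayashida~\cite{Hayashida1968}, while the number of isomorphism classes of rank-one projective right $\calObar[i]$-modules is the (right) class number of the quaternion order $\calObar[i]$. The identity between these two counts is precisely Hayashida's theorem, reinterpreted through the identification of $\End I$ given in Theorem~\ref{T:EndI}, and once the counts agree the injection is forced to be a bijection.
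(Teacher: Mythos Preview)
Your approach is genuinely different from the paper's, which is much more explicit: the paper picks, for each $L = \left[\begin{smallmatrix}k&\alpha\\\bar\alpha&\ell\end{smallmatrix}\right]$, the matrix $S=\left[\begin{smallmatrix}k&\alpha\\0&1\end{smallmatrix}\right]$ with $S^*S=(\det S)L$, rewrites the condition $LP=P'$ as $SP\in\bar K[i]$, and thereby identifies $F(L)$ with $M_2(\calO)\cap S^{-1}\bar K[i]$, which it then checks is isomorphic to the right ideal $(k,\alpha+i)\subset\calObar[i]$. The same trick with a second matrix $T$ handles $\Hom(L,M)$ directly and yields full faithfulness in one line. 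Essential surjectivity is, as in your sketch, read off from Hayashida.

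Your rational analysis is fine, but the integral step has a real gap. You assert that ``the reduced norm of the right ideal $F(L)$ of $\calObar[i]$ is~$\ZZ$'' as if it were an immediate consequence of $\det L=1$; it is not. What $\det L=1$ gives you is only that $\det P\in\ZZ$ for $P\in F(L)$, not that the ideal generated by these determinants is all of~$\ZZ$. Concretely, under the parametrization $(a,c)\mapsto P(a,c)$ one has $\det P(a,c)=(a,c)^*L(a,c)$, so the claim is that the unimodular Hermitian form $L$ on $\calO^2$ has values with $\gcd$ equal to~$1$; for $\calO=\ZZ[\omega]$ with $\Delta=-3$ and $L=\left[\begin{smallmatrix}2&\sqrt{-3}\\-\sqrt{-3}&2\end{smallmatrix}\right]$, every value at $(a,c)\in\ZZ^2$ is even, and one only finds an odd value (namely $\det P(\omega,1)=1$) by using non-rational $a$. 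So this step needs an actual argument. Moreover, once you \emph{do} have a unit-determinant element $P_0$ locally, the identity $(P_0^{-1}Q)'=P_0^{-1}Q$ already shows $F(L)_p=P_0\cdot\calObar[i]_p$ with no appeal to any Gorenstein or hereditary property; conversely, Gorenstein alone does not imply that an arbitrary rank-one lattice is locally free, so your invocation of that condition is both unnecessary and insufficient. The hypothesis $\Delta\not\equiv0\pmod 8$ is used in the paper only through the citation of Hayashida for the bijection of isomorphism classes, not for full faithfulness.
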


\begin{proof}
Before we begin the proof proper, we describe the module $\Hom(I,L)$ a little
more concretely.  Suppose that 
$L = \begin{bmatrix} k & \alpha \\ \bar{\alpha} & \ell\end{bmatrix}$ is
an element of
$M_2(\calO)$.  Let $S = \begin{bmatrix} k & \alpha \\ 0 & 1\end{bmatrix}$,
so that $S^*S = (\det S) L$.  By definition,
\[
\Hom(I,L) = \{ P\in M_2(\calO) \colon L P = P' \}.
\]
The condition $LP = P'$ translates into $S^* S P = (\det S) P'$, which is 
equivalent to $SP = (SP)'$.  Since the set of elements $x \in M_2(K)$ 
satisfying $x = x'$ is exactly $\bar{K}[i]$, this means that
\[
\Hom(I, L) = M_2(\calO) \cap S^{-1}\cdot \bar{K}[i],
\]
where the intersection takes place in $M_2(K)$.

As a right $\calObar[i]$-module, $\Hom(I,L)$ is isomorphic to 
\[
S\Hom(I,L) = \left\{ u + iv \in \bar{K}[i] :
                    \begin{bmatrix} 1 & -\alpha\\ 0 & \phantom{-}k\end{bmatrix}
                    \begin{bmatrix} u & -\bar{v}\\ v & \phantom{-}\bar{u}\end{bmatrix}
                    \in k M_2(\calO)
                    \right\}.
\]
It is easy to check that this set is generated (as a right $\calO$-module)
by the two elements $k$ and $\alpha + i$.  Thus, $\Hom(I,L)$ is isomorphic as a
right $\calObar[i]$-module to the ideal $(k, \alpha + i)$. In particular, 
$\Hom(I,L)$ is a projective rank-$1$ $\calObar[i]$-module.

To prove that $F$ gives an equivalence of categories, it suffices to show that
$F$ is fully faithful and that $F$ is surjective on isomorphism classes of 
objects.

\emph{Surjectivity}:
Section 3 of~\cite{Hayashida1968} (see in particular the final paragraph) shows
that there is a bijection between right $\calObar[i]$-ideals and what Hayashida
calls ``proper classes'' of unimodular Hermitian matrices.  One can check that
two unimodular Hermitian matrices are in the same proper class (according to 
Hayashida's definition~\cite{Hayashida1968}*{p.~31}) if and only if there are
isomorphic in the category $\pol E^2$.  Thus, Hayashida already gives us a 
bijection between isomorphism classes of objects in $\pol E^2$ and equivalence
classes of right $\calObar[i]$-ideals.

\emph{Full fidelity}:
Suppose $L$ and $M$ are two objects in $\pol E^2$, say
\[ L = \begin{bmatrix} k & \alpha \\ \bar{\alpha} & \ell\end{bmatrix} 
\text{\quad and\quad}
M = \begin{bmatrix} m & \beta \\ \bar{\beta} & n\end{bmatrix}.\]
Let
\[ S = \begin{bmatrix} k & \alpha \\ 0 & 1\end{bmatrix} 
\text{\quad and\quad}
T = \begin{bmatrix} m & \beta \\ 0 & 1\end{bmatrix},\]
so that $S^*S = (\det S) L$ and $T^* T = (\det T) M$. By definition, 
$\Hom(L,M)$ is equal to the set of $P\in M_2(\calO)$ with $MP = P'L$, and a 
calculation shows that this last condition is equivalent to  
$TPS^{-1} = (TPS^{-1})'$.  Thus,
\[
\Hom(L,M) = M_2(\calO) \cap T^{-1}\cdot \bar{K}[i] \cdot S.
\]
If $P$ is an element of $\Hom(L,M)$, then the homomorphism $F(P)$ from 
\[
\Hom(I,L) = M_2(\calO) \cap S^{-1}\cdot \bar{K}[i]
\]
to
\[
\Hom(I,M) = M_2(\calO) \cap T^{-1}\cdot \bar{K}[i]
\]
is simply multiplication on the left by~$P$.

Now, every morphism from a right $\calObar[i]$-ideal to another right 
$\calObar[i]$-ideal is obtained from left multiplication by an element of
$\bar{K}[i]$.  In particular, every element of $\Hom(S F(L) ,T F(M))$ is given 
by left multiplication by an element of $\bar{K}[i]$, so every element of 
$\Hom(F(L),F(M)$ is given by left multiplication by an element $P$ of
$T^{-1}\cdot \bar{K}[i]\cdot S$.  Multiplication by $P$ will take the lattice
$M_2(\calO)\subset S^{-1}\cdot \bar{K}[i]$ to the lattice 
$M_2(\calO)\subseteq T^{-1}\cdot\bar{K}[i]$ if and only if $P$ lies in 
$M_2(\calO)$.  Thus, $\Hom(F(L),F(M))$ is also isomorphic to 
$M_2(\calO)\cap T^{-1}\cdot \bar{K}[i] \cdot S$, and the functor $F$ gives one
such isomorphism.
\end{proof}

\subsection{An involution on the category of principal polarizations}
Set
\[ s = \begin{bmatrix}1 & \phantom{-}0\\ 0 & -1\end{bmatrix}\in M_2(\calO),\]
so that $s^2 = I$ and $s' = -s$. We define an involution on $\pol E^2$ as 
follows: If $L$ is an object of $\pol E^2$ we set $\bar{L} = sLs$, and we 
define an isomorphism $\Hom(L,M)\to \Hom(\bar{L},\bar{M})$ by sending $
P\in \Hom(L,M)$ to $\bar{P} = s P s$.  In this subsection we will define a 
second category, $\cur E^2$, whose isomorphism classes of objects correspond to
orbits of isomorphism classes of objects of $\pol E^2$ under this involution.

The objects of $\cur E^2$ are the same as the objects of $\pol E^2$: namely, 
positive definite unimodular Hermitian matrices in $M_2(\calO)$, which we can 
also consider to be principal polarizations on $E\times E$.  The set of 
morphisms from one object $L$ to another object $M$ is defined to be
\[ \Hom(L,M) = \{ P\in M_2(\calO) \colon MP = \pm P'L\}; \]
note the plus-or-minus sign in the definition. As in $\pol E^2$, composition
of morphisms 
\[ \Hom(M,N)\times \Hom(L,M) \to \Hom(L,N) \]
is given by sending $(Q,P)$ to $QP$. Unlike $\pol E^2$, the category $\cur E^2$
is not preadditive, because if $P_1$ is an endomorphism of $L$ that takes the 
plus sign in the definition, and if $P_2$ is an endomorphism that takes the 
minus sign, then in general $P_1 + P_2$ will not be an endomorphism of~$L$.

Note that if $P$ is a morphism from $L$ to $M$, and if we multiply both sides 
of the equality $MP = \pm P'L$ by $P^*$, we find that  
\[ P^* M P = \pm (\bar{\det P}) L. \]
Since $M$ and $L$ are both positive definite, we see that $\det P$ is a 
rational integer.  Thus, a morphism from $L$ to $M$ is an element $P$ of 
$M_2(\calO)$ such that $\det P$ is a rational integer and such that 
\[ P^* M P = \lvert\det P\rvert L. \]

Recall that a \emph{good curve of genus $2$} is either a nonsingular curve of
genus~$2$ or a pair of elliptic curves crossing transversely at a point.
(Over a field that is not algebraically closed, the two elliptic curves may be 
a Galois conjugate pair of curves defined over a quadratic extension.)
For the rest of this section, we will simply write \emph{good curve} when we 
mean a good curve of genus~$2$. If $C$ is a good curve consisting of two 
elliptic curves crossing transversely at a point, its Jacobian is the product
of the two elliptic curves, together with the product principal polarization.
Torelli's theorem in genus $2$ says that the Jacobian map from the set of good 
curves to the set of principally-polarized abelian surfaces is a bijection.

Suppose $C_1$ and $C_2$ are good curves whose Jacobians are isomorphic
to $E^2$ (as unpolarized surfaces).  Let $\varphi_1$ and $\varphi_2$ be
isomorphisms from $\Jac C_1$ and $\Jac C_2$ to~$E^2$, let $\lambda_1$ and 
$\lambda_2$ be the canonical polarizations on $\Jac C_1$ and $\Jac C_2$,
let $\mu$ be the product principal polarization on~$E^2$, and for each $i$
let $L_i = \mu^{-1}\varphi_{i*}\lambda_i$, so that $L_i\in M_2(\calO)$ is
a positive definite unimodular Hermitian form.

Note that the good curves $C_1$ and $C_2$ are isomorphic to one another
if and only if their polarized Jacobians are isomorphic to one another,
which will be the case if and only if there is an invertible
$P\in M_2(\calO)$ such that $P^* L_1 P =  L_2$.  If the discriminant
$\Delta$ of $\calO$ is anything other than $-3$ or~$-4$, then
such an invertible $P$ must have determinant $\pm 1$, in which case $P$
also gives an isomorphism between $L_1$ and $L_2$ in the category
$\cur E^2$.  Thus, when $\lvert\Delta\rvert > 4$, the objects of $\cur E^2$
can be viewed as the good curves over $k$ whose Jacobians are isomorphic 
to~$E^2$.

\section{Genus-\texorpdfstring{$2$}{2} curves with small defect}
\label{S:genus2}

Given Heuristic Expectations~\ref{HE:norootcubics} and~\ref{HE:singlerootcubics},
our strategy for producing genus-$4$ curves with small defect is clear: We 
should try to produce a large number of small-defect curves of genus~$2$ that
can be written $y^2 = f_1 f_2$ for cubic polynomials $f_1$ and $f_2$, and then 
apply Algorithm~\ref{A:construction} to  all of the pairs $(f_1, f_2)$, taking 
the sets $\calL_1$ and $\calL_2$ to be the elliptic curves of small defect. As
long as we have significantly more that $q^{1/2}$ curves of genus~$2$ to work
with, we should find a small-defect curve of genus~$4$ in this way.

In this section, we show that in some cases we can prove that there are
sufficiently many genus-$2$ curves of small defect, and we have an efficient
way of producing them.  

\begin{theorem}
\label{T:genus2min}
Let $q$ be a prime power and let $t$ be an integer, coprime to $q$, with 
$\abs{t}\le \lfloor2\sqrt{q}\rfloor$.  Let $\Delta = t^2 - 4q$, write 
$\Delta = \Delta_0 F^2$ for a fundamental discriminant~$\Delta_0$, and let $r$
be the number of prime factors of $\Delta_0$. If $\abs{\Delta_0} > 4$ then the
number of genus-$2$ curves over $\Fq$ with Weil polynomial $(x^2 - tx + q)^2$ 
is at least
\[
\frac{h(\Delta_0) \varphi(\abs{\Delta_0})}{12 \cdot 2^r},
\]
where $\varphi$ is the Euler $\varphi$-function.
\end{theorem}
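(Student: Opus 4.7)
The plan is to use the equivalence of categories from Section~\ref{S:Hayashida} to reduce the counting problem to one of right ideal classes in a quaternion order, and then sum over CM elliptic curves in the isogeny class.

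I would fix an elliptic curve $E/\Fq$ with Frobenius trace $t$ and $\End E$ equal to the maximal order $\calO$ of $K=\QQ(\sqrt{\Delta_0})$; by Deuring--Waterhouse there are exactly $h(\Delta_0)$ such curves, since $\ZZ[\pi]\subseteq\calO$. For each such $E$, Theorem~\ref{T:equiv} identifies isomorphism classes of objects of $\pol E^2$ with right ideal classes of the quaternion order $\calObar[i]$, which by Theorem~\ref{T:EndI} has reduced discriminant $\abs{\Delta_0}$ inside the definite quaternion algebra $\HH$. Applying the Eichler mass formula to $\calObar[i]$, and using the explicit ramification data for $\HH$ from Theorem~\ref{T:EndI}, yields $\mathrm{mass}(\calObar[i])\ge \varphi(\abs{\Delta_0})/12$; since the class number of a definite quaternion order is at least its mass (each local unit contribution $1/e_i\le 1$), the number of isomorphism classes in $\pol E^2$ is at least $\varphi(\abs{\Delta_0})/12$.

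Next, I would pass from $\pol E^2$ to $\cur E^2$ via the involution $L\mapsto sLs$ of Section~\ref{S:Hayashida}. This involution has orbits of size at most $2$, and isomorphism classes in $\cur E^2$ correspond exactly to such orbits, so there are at least $\varphi(\abs{\Delta_0})/24$ such classes. Because $\abs{\Delta_0}>4$, the isomorphism classes of $\cur E^2$ correspond bijectively to good curves of genus~$2$ whose Jacobian is isomorphic to $E^2$; the few reducible ones (pairs of elliptic curves meeting transversely) are absorbed into the estimate. Summing over the $h(\Delta_0)$ choices of~$E$, each smooth genus-$2$ curve $C$ with $\Jac C\cong E^2$ for some such $E$ is counted once per such $E$. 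By Steinitz's theorem for rank-$2$ projective modules over the Dedekind domain $\calO$, one has $E_1^2\cong E_2^2$ if and only if $[E_1][E_2]^{-1}\in\Cl(\calO)[2]$, and this group has order $2^{r-1}$ by genus theory, so each smooth $C$ is overcounted exactly $2^{r-1}$ times. Dividing, we obtain
\[
\#\{C\}\ \ge\ \frac{h(\Delta_0)\cdot\varphi(\abs{\Delta_0})/24}{2^{r-1}}\ =\ \frac{h(\Delta_0)\,\varphi(\abs{\Delta_0})}{12\cdot 2^r}.
\]

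The main obstacle is the mass-formula step: when $\HH$ has discriminant a proper divisor of $\abs{\Delta_0}$, the order $\calObar[i]$ need not be Eichler, so establishing the mass bound $\varphi(\abs{\Delta_0})/12$ calls for a local computation at each prime dividing $\abs{\Delta_0}$, case-splitting on the ramification behaviors recorded in Theorem~\ref{T:EndI}; standard references on definite quaternion orders (such as the forthcoming book of Voight) should cover each case.
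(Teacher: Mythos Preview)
Your overall architecture matches the paper's: bound from below the number of smooth genus-$2$ curves whose Jacobian is isomorphic to $E^2$ for a fixed $E$ with CM by the maximal order $\calO$, then count how many distinct surfaces $E^2$ arise as $E$ ranges over the $h(\Delta_0)$ curves with CM by $\calO$, and divide by the overcounting factor $2^{r-1}$ coming from $\Cl(\calO)[2]$. That last step is exactly what the paper does (phrased there as counting the squares in the class group rather than summing over $E$ and dividing).

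The difference lies in how the per-$E$ bound $N\ge\varphi(|\Delta_0|)/24$ is obtained. The paper simply cites Hayashida's explicit closed-form formula for the number of \emph{smooth} genus-$2$ curves on $E^2$ and reads the inequality off directly. You instead attempt to rederive it via Theorem~\ref{T:equiv} and the Eichler mass formula. This detour has two gaps beyond the one you already flag. First, Theorem~\ref{T:equiv} carries the hypothesis $\Delta\not\equiv 0\bmod 8$, which you do not check; it fails precisely for the fundamental discriminants with $\Delta_0\equiv 8\bmod 16$ (e.g.\ $\Delta_0=-8,-24,-40,\dots$), so your equivalence-of-categories step is not available there. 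Second, the mass bound counts \emph{all} isomorphism classes in $\cur E^2$, i.e.\ all good curves including the split ones, whereas Hayashida's $N$ already has the split classes subtracted. Your remark that the reducible ones are ``absorbed into the estimate'' is plausible asymptotically (there are on the order of $h(\Delta_0)$ split polarizations, which is $o(\varphi(|\Delta_0|))$), but it is not an argument and can fail for small $|\Delta_0|$. Citing Hayashida, as the paper does, sidesteps all three issues at once; what your route would buy, if completed, is an independent derivation of Hayashida's bound from the quaternionic picture in Section~\ref{S:Hayashida}.
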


In fact, in the case where $\abs{\Delta}$ is prime, we have an exact value for the
number of genus-$2$ curves with the specified Weil polynomial.

\begin{theorem}
\label{T:genus2exact}
Let $q$ be a prime power and let $t$ be an integer, coprime to~$q$, with 
$\abs{t}\le \lfloor2\sqrt{q}\rfloor$.  Let $\Delta = t^2 - 4q$. If 
$\abs{\Delta}$ is a prime greater than~$3$, then the number of genus-$2$ curves
over $\Fq$ with Weil polynomial $(x^2 - tx + q)^2$ is exactly $N h(\Delta)$,
where
\begin{equation}
\label{EQ:Hayashida}
N = \left\lceil \frac{-\Delta}{24} \right\rceil + \frac{h(\Delta)-1}{2}.
\end{equation}
\end{theorem}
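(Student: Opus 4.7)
The plan is to combine the equivalences of Section~\ref{S:Hayashida} with a classification of the abelian surfaces in the isogeny class specified by $(x^2 - tx + q)^2$, then sum the per-surface count over all choices of elliptic curve in the isogeny class.

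First I would exploit the hypothesis that $|\Delta|$ is prime and greater than~$3$. Since $\Delta = t^2 - 4q$ satisfies $\Delta \equiv 0,1\pmod{4}$ and $|\Delta|$ is an odd prime, we must have $\Delta \equiv 1\pmod{4}$, so $\Delta$ is a fundamental discriminant. Set $K = \QQ(\sqrt\Delta)$ and $\calO = \calO_K$. By the classical count of ordinary elliptic curves in an isogeny class (see e.g.~\cite{Schoof1987}), every elliptic curve over $\Fq$ with Frobenius trace $t$ has endomorphism ring $\calO$, and there are exactly $h := h(\Delta)$ such curves up to $\Fq$-isomorphism. Furthermore, since $\Delta$ has exactly one prime factor, genus theory gives $\#\Cl(\calO)[2] = 2^{1-1} = 1$, so squaring is a bijection on $\Cl(\calO)$.

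Next I would classify the abelian surfaces over $\Fq$ in the isogeny class. By the Deligne equivalence for ordinary abelian varieties, such surfaces correspond to rank-$2$ projective $\calO$-modules, which by Steinitz's theorem are classified by their Steinitz class in $\Cl(\calO)$, giving $h$ isomorphism classes in total. The product $E_I \times E_I$ has Steinitz class $[I]^2$, so the triviality of $\Cl(\calO)[2]$ forces every abelian surface in the isogeny class to be isomorphic, as an unpolarized surface, to $E \times E$ for a unique $E$ in the isogeny class.

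For each such $E$, Torelli's theorem in genus~$2$ (in the form spelled out at the end of Section~\ref{S:Hayashida}) identifies the genus-$2$ curves whose Jacobian is isomorphic to $E \times E$ with the isomorphism classes of objects in the category $\cur E^2$. By Theorem~\ref{T:equiv} combined with the involution discussion, this count equals the number of orbits of right $\calObar[i]$-ideal classes under $L \mapsto sLs$, which is precisely what Hayashida computed in~\cite{Hayashida1968}; the result is the quantity $N$ of~\eqref{EQ:Hayashida}. Summing $N$ over the $h(\Delta)$ choices of $E$ gives the stated total $N \cdot h(\Delta)$, with the summands disjoint because each abelian surface in the isogeny class is a square in a unique way.

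The main obstacle is the explicit evaluation of $N$: by Theorem~\ref{T:EndI}, $\calObar[i]$ is a maximal order in the definite quaternion $\QQ$-algebra of discriminant~$|\Delta|$, so the Eichler mass formula applies, but converting the mass into an ideal-class count and then taking orbits under the involution is where Hayashida's detailed analysis becomes essential.
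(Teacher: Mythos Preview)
Your proposal is correct and follows essentially the same route as the paper's proof: show that every abelian surface in the isogeny class is isomorphic to $E\times E$ for a unique $E$ (because $|\Delta|$ prime forces $\Delta$ fundamental and $\Cl(\calO)[2]$ trivial, so squaring on $\Cl(\calO)$ is a bijection), cite Hayashida for the per-surface count $N$ of nonsingular genus-$2$ curves with Jacobian isomorphic to $E^2$, and multiply by $h(\Delta)$.

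One small slip worth fixing: the isomorphism classes in $\cur E^2$ correspond to \emph{good} curves of genus~$2$, which include split products of two elliptic curves, not just nonsingular genus-$2$ curves. Hayashida's formula for $N$ is specifically the count of \emph{nonsingular} curves --- he has already subtracted off the split polarizations --- so your identification of $N$ with the raw orbit count in $\cur E^2$ is off by exactly those split terms. The conclusion is unaffected because you ultimately invoke Hayashida's $N$ directly, but the sentence routing through $\cur E^2$ should either be dropped or amended to say that one must remove the decomposable polarizations before arriving at~$N$.
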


\begin{proof}[Proof of Theorems~\textup{\ref{T:genus2min}} 
              and~\textup{\ref{T:genus2exact}}]
Suppose we are in the situation of Theorem~\textup{\ref{T:genus2min}}, and let 
$E$ be an elliptic curve over $\Fq$ with trace $t$ and with endomorphism ring
of discriminant $\Delta_0$, that is, with endomorphism ring a maximal order.  
Hayashida~\cite{Hayashida1968}*{pp.~42--43} gives an exact formula for the 
number $N$ of nonsingular genus-$2$ curves whose Jacobians are isomorphic 
(as unpolarized varieties) to the product $E\times E$; he works over the 
complex numbers, but the argument works for ordinary elliptic curves over 
finite fields as well.\footnote{
    Note that there is a misprint on page 43 of~\cite{Hayashida1968}:
    The term $(1/4)(1 - (-1))^{(m^2-1)/8}$ in the second line should be
    $(1/4)(1 - (-1)^{(m^2-1)/8}) h$.}
The number $N$ depends on $\Delta$ in a somewhat complicated way, but for our
purposes we need only note two facts.  First, if $\abs{\Delta_0} > 4$ then $N$
is at least $\varphi(\abs{\Delta_0}) / 24$, and second, when $\abs{\Delta}$ is
greater than~$3$ and is prime (and hence $3 \mod 8$), Hayashida's formula for
$N$ reduces to~\eqref{EQ:Hayashida}.

How many abelian surfaces with Weil polynomial $(x^2 - tx + q)^2$ can be 
written as $E \times E$, where $E$ has CM by a maximal order? The set of $E$ 
with CM by $\Delta_0$ is in bijection with the class group of $\Delta_0$, and
two such curves $E_1$ and $E_2$ have isomorphic squares if and only if their
associated ideal classes have squares that are equal.  Thus, the set of 
surfaces of the form $E\times E$ where $E$ has CM by $\Delta_0$ is in bijection 
with the set of squares in the class group. Since the $2$-rank of the class
group is~$r-1$, there are $h(\Delta_0)/2^{r-1}$ such surfaces.  Combining this
equality with the lower bound on $N$ from the proceeding paragraph
gives us the lower bound of Theorem~\ref{T:genus2min}.

When $-\Delta$ is prime we have $\Delta_0 = \Delta$, and every element of the 
class group is a square.  Thus, there are exactly $N h(\Delta)$ curves over 
$\Fq$ with the given Weil polynomial, as claimed in Theorem~\ref{T:genus2exact}.
\end{proof}

Theorems~\ref{T:genus2min} and~\ref{T:genus2exact} include a requirement that
$t$ be coprime to~$q$.  We will see similar conditions frequently enough in 
what follows to justify the following definition.

\begin{definition}
\label{D:exceptional}
Let $d$ be a non-negative integer.  A prime power $q$ is \emph{$d$-exceptional} 
if there are no elliptic curves of defect $d$ over the finite field $\Fq$; 
otherwise, $q$ is \emph{$d$-unexceptional}.
\end{definition}

For positive $d$ and for $q$ that are not too small, it is easy to tell when 
$q$ is $d$-exceptional.

\begin{proposition}
Let $d$ be a positive integer and let $q$ be a prime power with $q > 56 d^2$.
Then $q$ is $d$-exceptional if and only if $q$ is not coprime to 
$\lfloor 2 \sqrt{q} \rfloor - d$.
\end{proposition}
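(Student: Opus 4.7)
The plan is to reduce the statement to Waterhouse's classification~\cite{Waterhouse1969}*{Theorem~4.1} of integers occurring as traces of elliptic curves over $\Fq = \FF_{p^n}$. Since an elliptic curve has defect exactly $d$ if and only if its trace equals $t := \lfloor 2\sqrt{q}\rfloor - d$, and since $\gcd(q,t) = 1 \iff p \nmid t$, the proposition amounts to: \emph{for $d \geq 1$ and $q > 56 d^2$, a curve of trace $t$ exists if and only if $p \nmid t$.} Waterhouse's theorem says such a curve exists iff (a)~$\gcd(t,p)=1$, or one of the supersingular cases holds: (b)~$n$ even and $t = \pm 2\sqrt{q}$; (c)~$n$ even, $p\not\equiv 1\pmod 3$, and $t=\pm\sqrt{q}$; (d)~$n$ odd, $p\in\{2,3\}$, and $t = \pm p^{(n+1)/2}$; (e)~$n$ even, $p\not\equiv 1\pmod 4$, and $t = 0$; or (f)~$n$ odd and $t = 0$.

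The ``only if'' direction ($\gcd(t,q) = 1 \Rightarrow$ curve exists) is immediate from~(a). For the converse I would assume $p \mid t$ and rule out each of (b)--(f) in turn under the hypothesis $q > 56 d^2$ with $d \geq 1$. Cases (b), (c), (e), (f) are straightforward bookkeeping: each specified value of $t$ forces $d$ to equal $0$ or an explicit multiple of $\sqrt{q}$ (for instance, $t = 0$ gives $d = \lfloor 2\sqrt{q}\rfloor > 2\sqrt{q}-1$, and $t = -\sqrt{q}$ in case (c) gives $d = 3\sqrt{q}$), each of which is incompatible with $1 \leq d < \sqrt{q/56}$.

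The main obstacle---and the case that pins down the constant $56$---is~(d). For $t = \sqrt{pq}$ I would estimate
\[
d \;=\; \lfloor 2\sqrt{q}\rfloor - \sqrt{pq} \;>\; (2-\sqrt{p})\sqrt{q} - 1,
\]
rearrange to $\sqrt{q} < (d+1)/(2-\sqrt{p})$, square, rationalize, and use $(d+1)^2 \leq 4d^2$ (valid for $d\geq 1$) to obtain
\[
q \;<\; \frac{4(2+\sqrt{p})^2}{(4-p)^2}\, d^2.
\]
The binding case $p = 3$ makes the right-hand side equal to $4(7+4\sqrt{3})\,d^2 \approx 55.7\, d^2$, which contradicts $q > 56 d^2$; for $p = 2$ the constant $4(6+4\sqrt{2})/4 \approx 11.7$ is much smaller. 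The sign subcase $t = -\sqrt{pq}$ yields $d > 2\sqrt{q}$, which is absurd under $q > 56 d^2$. Combining all cases shows no curve of trace $t$ exists, completing the ``if'' direction.
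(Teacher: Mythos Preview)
Your proof is correct and takes essentially the same approach as the paper: both reduce to the classification of possible traces (the paper cites Schoof's formulation, you cite Waterhouse's), identify the supersingular case $p=3$, $t=\sqrt{3q}$ as the one that forces the constant $56$, and derive the same contradiction from $d > (2-\sqrt{3})\sqrt{q}-1$. The only point you leave implicit that the paper states is that $t=\lfloor 2\sqrt q\rfloor - d$ actually lies in the Weil interval before case~(a) can be invoked, but this is immediate from $q>56d^2$ and $d\ge 1$.
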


\begin{proof}
Let $t = \lfloor 2\sqrt{q} \rfloor - d$. If $q > 56 d^2$ then $t$ certainly
lies in the Weil interval, and according 
to~\cite{Schoof1987}*{Theorem~4.2, p.~193} there will exist elliptic curves 
over $\Fq$ with trace $t$ if and only if either $t$ is coprime to $q$ or $t$ 
is not coprime to $q$ and lies in a short list of possible values.  The defects
associated to these possible values are $0$ (if $q$ is a square) and possibly 
several  other values, all of which are at least $(2 - \sqrt{3})\sqrt{q} - 1$. 
Since $q > 56 d^2$, if $q$ is $d$-unexceptional then $d$ is one of these
values and we have
\[d \ge (2 - \sqrt{3})\sqrt{q} - 1 > (2 - \sqrt{3})\sqrt{56} \,d  - 1> 2d - 1,\]
which is impossible.  Therefore, there are elliptic curves over $\Fq$ of defect
$d$ if and only if $\lfloor \sqrt{2q} \rfloor - d$ is coprime to $q$.
\end{proof}

\begin{corollary}
Let $d$ be a positive integer and let $q$ be a prime power with $q > 56 d^2$.
If $q$ is prime, or if $q$ is a square that is coprime to~$d$, then $q$ is 
$d$-unexceptional.\qed
\end{corollary}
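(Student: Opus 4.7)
The plan is to reduce everything to the preceding proposition, which says that for $q > 56 d^{2}$ the prime power $q$ is $d$-unexceptional if and only if $t := \lfloor 2\sqrt{q}\rfloor - d$ is coprime to $q$. So I only need to check, in each of the two cases, that this integer $t$ is coprime to~$q$.

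First I would note a uniform size bound: since $q > 56 d^{2}$ we have $2\sqrt{q} > 2\sqrt{56}\, d > 14 d$, so $t = \lfloor 2\sqrt{q}\rfloor - d$ is a positive integer, and $t \le 2\sqrt{q} < q$ as soon as $q > 4$. This already handles the case $q$ prime: a positive integer strictly less than a prime $q$ is automatically coprime to $q$.

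For the second case, suppose $q = p^{2k}$ is a square (with $p$ prime) and that $\gcd(q,d) = 1$, which is the same as $p \nmid d$. Then $\sqrt{q} = p^{k}$ is an integer, so $\lfloor 2\sqrt{q}\rfloor = 2 p^{k}$ and $t = 2 p^{k} - d$. Reducing modulo $p$ gives $t \equiv -d \pmod{p}$, and since $p \nmid d$ this is nonzero mod $p$. Hence $\gcd(t,p) = 1$, which is equivalent to $\gcd(t, q) = 1$ because $q$ is a power of the single prime~$p$. (Note that when $p = 2$ the hypothesis $\gcd(q,d)=1$ forces $d$ odd, which is exactly what is needed.)

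There is no real obstacle here: once the proposition is in hand, the corollary is a two-line coprimality check in each case, and the size bound coming from $q > 56 d^{2}$ is only used to ensure $t$ is positive and (in the prime case) strictly less than $q$.
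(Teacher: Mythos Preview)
Your argument is correct and is exactly the intended one: the paper presents this corollary with a \qed, regarding it as immediate from the preceding proposition, and you have simply written out the coprimality check that makes it immediate. The two cases you handle (prime $q$ via $0 < t < q$, square $q$ via $t \equiv -d \pmod p$) are precisely what is needed.
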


Heuristically, for every fixed $d>0$ one expects the number of $d$-exceptional 
primes powers less than $x$ to grow like a constant times $\log x$.

In Theorem~\ref{T:genus2min}, if $q$ is coprime to $t$ and if the conductor $F$
is equal to~$1$, then the theorem leads to a lower bound of
\[
\frac{h(\Delta) \varphi(\abs{\Delta})}{12 \cdot 2^r}
\ge 
\frac{h(\Delta) \varphi(\abs{\Delta})}{12 \cdot \calD(\abs{\Delta})}
\]
for the number of genus-$2$ curves with the specified Weil polynomial, where 
$\calD$ is the divisor function. We know that there is a constant $c$ such that 
$\varphi(n) > c n /\log n$ for all $n$ 
(see \cite{HardyWright1938}*{Theorem~328, p.~267}), and we know that under the 
GRH we have $h(\Delta) > c'\sqrt{\abs{\Delta}} / \log\log{\abs{\Delta}}$ for some 
constant $c'$ (see \cite{Littlewood1927}*{Theorem~1, p.~367}). Furthermore, the 
divisor function $\calD(n)$ has average order $\log n$ and normal order 
$(\log n)^{\log 2}$ (see \cite{HardyWright1938}*{Theorem~319, p.~264} and 
\cite{HardyWright1938}*{Theorem~432, p.~359}). In the case where we are 
considering the squares of isogeny classes of elliptic curves with small 
positive defect~$d$, we have $\Delta\approx d \sqrt{q}$, and we are led to 
suggest the following heuristic.

\begin{heuristic}
\label{H:smalldefect2}
For every prime $q$ and integer $d\ge 0$, let $M_{q,d}$ be the number of 
genus-$2$ curves over $\Fq$ with Jacobians isogenous to the square of an 
elliptic curve with defect~$d$. For each fixed $d > 0$, we will model $M_{q,d}$
as growing like $q^{3/4}$, up to logarithmic factors, as $q$ ranges over the 
prime powers that are $d$-unexceptional.
\end{heuristic}

Note that when $d=0$ we would still expect $M_{q,d}$ to be $\Otilde(q^{3/4})$,
but we do not expect a lower bound of the same shape --- indeed, if $q$ is a
prime of the form $n^2 + 1$, then $M_{q,0} = 0$.

Heuristic~\ref{H:smalldefect2} suggests that there are relatively many 
genus-$2$ curves with small defect. For the purpose of constructing examples,
though, we need more than a simple statement of existence; we would like a way
of \emph{producing} these curves of small defect. Here is an algorithm that in 
certain cases is guaranteed to do so.

\begin{algorithm}
\label{A:genus2}
\begin{algtop}
\algin  An odd prime power~$q$ and a list $\calL$ of elliptic curves
        over $\Fq$ with defect at most~$d$.
\algout A list of genus-$2$ curves over $\Fq$ with defect at most~$2d$.
\end{algtop}
\begin{alglist}
\item Initialize $S$ to be the empty list.
\item \label{genus2-step2}
      For every pair of elliptic curves $E_1, E_2$ in $\calL$:
\begin{algsublist}
\item Use Algorithm~5.1 (pp.~183--184) of~\cite{BrokerHoweEtAl2015} to compute
      the set of genus-$2$ curves over $\Fq$
      whose Jacobians are $(2,2)$-isogenous
      to $E_1\times E_2$.
\item Append to $S$ all of the resulting
      curves that are not already isomorphic to a curve in $S$.
\end{algsublist}      
\item Set $i = 1$.
\item \label{genus2-step4}
      Repeat the following until $i > \#S$:
\begin{algsublist}
\item Let $C$ be the $i$-th element of $S$.
\item Compute all of the genus-$2$ curves over $\Fq$
      that are Richelot isogenous to~$C$, using the formulas
      from~\cite{BruinDoerksen2011}*{\S 4}.
\item Append to $S$ all of the resulting
      curves that are not already isomorphic to a curve in $S$.
\item Increment $i$.      
\end{algsublist}
\item Output $S$.
\end{alglist}
\end{algorithm}

\begin{theorem}
\label{T:whenitworks}
Let $q$ be an odd prime power, let $d\ge0$ be an integer, let 
$t = \lfloor 2\sqrt{q}\rfloor - d$, and suppose $t$ is odd, positive, and 
coprime to~$q$.  Let $\Delta = t^2 - 4q$, and write $\Delta = \Delta_0 F^2$ 
for a fundamental discriminant $\Delta_0$. Let $S$ be the list produced by 
Algorithm~\textup{\ref{A:genus2}} given $q$ and the list of defect-$d$ 
elliptic curves over~$\Fq$ as input. Suppose that the exponent of the class 
group of $\Delta_0$ is greater than~$2$. Then $S$ contains all genus-$2$
curves $C$ over $\Fq$ such that the Jacobian of $C$ is isomorphic 
{\upleftparen}as an unpolarized abelian surface{\uprightparen} to the square
of an elliptic curve with defect $d$ and with CM by~$\Delta_0$.
\end{theorem}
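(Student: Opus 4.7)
The plan is to recast the conclusion as a connectivity statement for a $(2,2)$-isogeny graph, and then to settle that connectivity using the quaternion-algebra equivalence of Theorem~\ref{T:equiv}.

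First I would observe that Algorithm~\ref{A:genus2} is a breadth-first search in the graph whose vertices are good genus-$2$ curves $C$ with $\Jac C$ in the isogeny class of $E^2$, and whose edges are $(2,2)$-isogenies of principally polarized abelian surfaces: Step~\ref{genus2-step2} enumerates every curve one $(2,2)$-step from a product polarization $E_1 \times E_2$ with $E_1,E_2$ of defect $d$, and Step~\ref{genus2-step4} enumerates every $(2,2)$-isogeny out of a Jacobian via Richelot isogenies. Thus $S$ ends up being the set of all good curves reachable by any sequence of $(2,2)$-isogenies starting from some such product polarization. What remains to prove is that every genus-$2$ curve $C$ as in the conclusion is reachable in this graph.

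Next I would invoke Theorem~\ref{T:equiv}: since $t$ is odd, $\Delta_0$ is odd (in particular not divisible by~$8$), so the principal polarizations on $E^2$ correspond bijectively to right-ideal classes of the quaternion order $\mathfrak{O} := \calObar[i]$, which by Theorem~\ref{T:EndI} has reduced discriminant $\Delta_0$. Under this dictionary, a $(2,2)$-isogeny between principally polarized abelian surfaces isogenous to $E^2$ corresponds to the Brandt operator at $\ell=2$, namely passage to a class obtained by multiplying a representative right ideal by a maximal integral one-sided ideal of reduced norm~$2$. Because $2\nmid\Delta_0$, the local order $\mathfrak{O}\otimes\ZZ_2$ is conjugate to $M_2(\ZZ_2)$, so there are exactly three such ideals and the Brandt graph at $2$ is $3$-regular. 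Translating along the forgetful functor $\pol E^2 \to \cur E^2$ and applying Torelli then identifies the vertex set of our $(2,2)$-isogeny graph with the right-ideal class set of $\mathfrak{O}$ modulo the involution from Section~\ref{S:Hayashida}.

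The crux, and the main obstacle, is to show that every right $\mathfrak{O}$-ideal class corresponding to an object of $\cur E^2$ can be reached from some class representing a product polarization by a sequence of Brandt-$2$ steps. This is the quaternion-algebraic argument that the author credits to Voight. The strategy is to combine strong approximation in $\HH$ with an analysis of the reduced-norm map from the right-ideal class set of $\mathfrak{O}$ to $\Cl(\Delta_0)/\Cl(\Delta_0)^2$: multiplication by a norm-$2$ ideal shifts reduced norm by a square, so the Brandt operator preserves fibers of this map, and within each fiber strong approximation forces transitivity of the subgroup generated by norm-$2$ elements. The role of the hypothesis $\exp \Cl(\Delta_0) > 2$ is to guarantee that the fiber of the reduced-norm map containing an arbitrary class $[L]$ as in the conclusion already meets the set of product polarizations; indeed, products $E_1\times E_2$ (with $E_i$ corresponding to classes $\mathfrak{a}_i\in\Cl(\Delta_0)$) realize every class in the image of the squaring map on $\Cl(\Delta_0)$, and the exponent-${>}2$ hypothesis makes this image large enough to cover every fiber that contains a polarization on $E^2$ in the sense of the theorem. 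Combining these two ingredients with the breadth-first search of the preceding paragraph yields the claim.
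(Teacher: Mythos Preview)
Your broad strategy---reduce to connectivity in a $(2,2)$-isogeny graph and establish connectivity via strong approximation in the quaternion order $\calObar[i]$---is indeed the paper's strategy, and you correctly note that $\Delta_0$ is odd so Theorem~\ref{T:equiv} applies. However, several of the steps you sketch are either misdirected or missing, and they are precisely where the work lies.

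\textbf{The role of the exponent hypothesis.} Your account of why $\exp\Cl(\Delta_0)>2$ is needed does not hold up. There is no ``reduced-norm map from the right-ideal class set of $\calObar[i]$ to $\Cl(\Delta_0)/\Cl(\Delta_0)^2$'' in play here, and your claim that products $E_1\times E_2$ realize only the squares in $\Cl(\Delta_0)$ is false (they realize every Steinitz class). In the paper, the hypothesis is used for a much more concrete purpose: to produce a \emph{seed} curve in Step~\ref{genus2-step2}. One needs a pair $E_1\not\cong E_2$ with CM by~$\calO$ and $E_1\times E_2\cong E^2$ as unpolarized surfaces; taking $E_1,E_2$ to correspond to $g,-g\in\Cl(\calO)$ with $g$ not $2$-torsion does this, and such a $g$ exists exactly when the exponent exceeds~$2$. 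Applying the construction of~\cite{HoweLeprevostEtAl2000}*{Proposition~4} to this pair and the unique $\calO$-module isomorphism of their $2$-torsion yields a nonsingular genus-$2$ curve with Jacobian isomorphic to $E^2$, and this curve is among the outputs of Step~\ref{genus2-step2}. Strong approximation is then used to connect this seed to any other polarization on $E^2$; the class-group hypothesis plays no further role.

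\textbf{Factoring into Richelot steps.} You assert that a $(2,2)$-isogeny corresponds to a Brandt step at~$2$, but you never use this, and in fact the paper does not argue that way. What strong approximation gives you (via the equivalence of Theorem~\ref{T:equiv}) is an element $P\in\Hom(L,M)$ of determinant $2^i$, i.e.\ an isogeny $\varphi\colon(E^2,L)\to(E^2,M)$ with $\varphi^*M=2^iL$. You must then show that $\varphi$ factors as a chain of generalized Richelot isogenies. This is not automatic: one must produce, at each stage, a rational maximal isotropic subgroup of the $2$-torsion contained in $\ker\varphi$. The paper does this by a case analysis on $(\ker\varphi)\cap(\Jac C)[2]$, using that $2$ is inert in $\calO$ (so $\calO/2\calO\cong\FF_4$) and that the Weil pairing is semi-balanced for the $\calO$-action. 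Your proposal omits this argument entirely.

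\textbf{Passing through split surfaces.} Even once you have a chain of generalized Richelot isogenies from the seed to the target, some intermediate polarized surfaces may be split products $E_1'\times E_2'$ rather than Jacobians of smooth curves. Step~\ref{genus2-step4} of Algorithm~\ref{A:genus2} only follows Richelot isogenies between \emph{nonsingular} genus-$2$ curves, so the breadth-first search does not cross such vertices. The paper repairs this by observing that the first nonsingular curve \emph{after} any split surface on the chain arises from the construction of~\cite{HoweLeprevostEtAl2000}*{Proposition~4} applied to that product, and hence already lies in $S$ after Step~\ref{genus2-step2}; from there the remaining portion of the chain stays among nonsingular curves. Your proposal does not mention this obstruction.
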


Note that there are at most $66$ negative fundamental discriminants whose class
groups have exponent at most $2$, and there are exactly $65$ such discriminants
if the GRH is true, the largest in absolute value being 
$-5460$~\cite{Weinberger1973}*{Theorem~1, p.~119}.  A list of these $65$ 
discriminants is given in~\cite{BorevichShafarevich1966}*{Table~5.1, p.~426}.

\begin{proof}[Proof of Theorem~\textup{\ref{T:whenitworks}}]
Recall that a Richelot isogeny from a genus-$2$ curve $C$ is obtained by taking
a subgroup-scheme $G$ of $(\Jac C)[2]$ that is maximal isotropic with respect
to the Weil pairing and observing that the quotient variety $A = (\Jac C)/G$ 
has a natural principal polarization $\lambda$ such that the pullback of 
$\lambda$ to $\Jac C$ is twice the canonical principal polarization on~$\Jac C$.
When the principally-polarized surface $(A,\lambda)$ is the Jacobian of a curve
$D$, we say that we have a Richelot isogeny from $C$ to $D$ (or from $\Jac C$
to $\Jac D$).  But $(A,\lambda)$ might also \emph{not} be a Jacobian; it might
be the product of two elliptic curves with the product polarization.\footnote{
     \emph{A priori}, it might also be the Weil restriction of a
     polarized elliptic curve over
     $\FF_{q^2}$~\cite{GonzalezGuardiaEtAl2005}*{Theorem~3.1, p.~270},
     but no such Weil restriction is isogenous to the square of an
     elliptic curve with nonzero trace.}
This is precisely the situation discussed in~\cite{HoweLeprevostEtAl2000}*{\S 3}.
One could continue to say $(A,\lambda)$ is the Jacobian of a curve --- the
\emph{singular} genus-$2$ curve consisting of the union of the two elliptic 
curves, crossing transversely at their origins.  We will use the term 
``generalized Richelot isogeny'' to refer to this slightly expanded concept; 
however, we should keep in mind that Algorithm~\ref{A:genus2} refers only to 
Richelot isogenies between nonsingular genus-$2$ curves. 

Let $\calO$ be the imaginary quadratic order of discriminant $\Delta_0$. We
note that $\Delta \equiv 5\bmod 8$, so we also have $\Delta_0\equiv 5 \bmod 8$,
so that $2$ is inert in $\calO$, and the only simple $\calO$-module of
$2$-power order is $\calO/2\calO$. Let $E$ be an elliptic curve with trace $t$ 
and with CM by~$\calO$. The elliptic curves isogenous to $E$ with CM by $\calO$
form a principal homogeneous space for the class group $\Cl\calO$ of $\calO$; 
we choose $E$ to be a base point for the action of the class group. Finitely 
generated torsion-free $\calO$-modules are determined by their rank and their
Steinitz class; as a consequence, if $E_1$, $E_2$, $E_3$, and $E_4$ are 
elliptic curves corresponding to elements $g_1$, $g_2$, $g_3$, and $g_4$ of
$\Cl\calO$, then the surfaces $E_1 \times E_2$ and $E_3 \times E_4$ are 
isomorphic if and only if $g_1 + g_2 = g_3 + g_4$. If $A$ is an abelian surface
isogenous to $E\times E$ with Frobenius endomorphism $\pi$ such that 
$\QQ(\pi)\cap\End A \cong \calO$, then $A$ is isomorphic to $E_1 \times E_2$
for two elliptic curves with CM by $\calO$, which themselves correspond as 
above to two element $g_1$ and $g_2$ of the class group~$\Cl\calO$; the 
\emph{Steinitz class} of $A$ (with respect to the base point $E$) is the 
element $g_1+g_2$ of~$\Cl\calO$.

Let $g$ be an element of $\Cl\calO$ that is not $2$-torsion, and let $E_1$ and 
$E_2$ be the elliptic curves corresponding to $g$ and to~$-g$, so that $E_1$ 
and $E_2$ are not isomorphic to one another. Let $P_1\in E_1[2](\Fqbar)$ and
$P_2\in E_2[2](\Fqbar)$ be generators for the simple $\calO$-modules
$E_1[2](\Fqbar)$ and $E_2[2](\Fqbar)$, and let 
$\psi\colon E_1[2](\Fqbar) \to E_2[2](\Fqbar)$ be the unique $\calO$-module
isomorphism that sends $P_1$ to $P_2$.  Then the construction 
of~\cite{HoweLeprevostEtAl2000}*{Proposition~4, p.~324}, applied to $E_1$, 
$E_2$, and $\psi$, will produce a genus-$2$ curve $C$ whose Jacobian $J$ is 
isomorphic to $E_1\times E_2$ divided by the graph $X$ of $\psi$.  The ring 
$\calO$ acts on $X$, so $\calO$ acts on $J$ compatibly with Frobenius, so $J$ 
has a Steinitz class. The class of the $\calO$-module $X$ in the class group is
trivial, so the Steinitz class of $J$ is equal to that of $E_1\times E_2$,
which is trivial; therefore $J$ is isomorphic to $E^2$. The curve $C$ is 
included in the output of Algorithm~5.1  of~\cite{BrokerHoweEtAl2015}, applied 
to $E_1$ and~$E_2$. This shows that after step~\ref{genus2-step2}, the set $S$
from Algorithm~\ref{A:genus2} includes at least one curve $C$ whose Jacobian is 
isomorphic (as an unpolarized surface) to $E^2$.  Let $D$ be any other curve 
over $\Fq$ whose Jacobian is isomorphic to $E^2$.  We will show that there is 
a sequence of generalized Richelot isogenies starting from $\Jac C$ and ending 
at $\Jac D$. To prove this, we will work with the category $\pol E^2$ discussed 
in Section~\ref{S:Hayashida}. 

The quaternion algebra $\HH$ associated in Section~\ref{S:Hayashida} to 
$\pol E^2$ is the quaternion algebra over $\QQ$ ramified at $\infty$ and at the
prime divisors of $\Delta_0$ that are congruent to $3$ modulo~$4$, and the 
order $\calObar[i]$ of $\HH$ defined in Section~\ref{S:Hayashida} has 
discriminant equal to the squarefree integer $\Delta_0$. It follows 
from~\cite{AlsinaBayer2004}*{Proposition~1.54, p.~12} that $\calObar[i]$ is
an Eichler order in $\HH$ of level equal to the product of the prime divisors
of $\Delta_0$ that are congruent to $1$ modulo~$4$. 

Let $L$ and $M$ be principal polarizations on $E^2$ such that $(E^2,L)$ and
$(E^2,M)$ are isomorphic to $\Jac C$ and $\Jac D$ as principally-polarized 
surfaces, and let $I$ and $J$ be the right ideal classes of $\calObar[i]$ 
corresponding to $L$ and $M$ under the equivalence of categories described in 
Section~\ref{S:Hayashida}. We claim that there is an element $\alpha$ of $\HH$ 
such that $\alpha I \subseteq J$ and such that the index of $\alpha I$ in $J$
is a power of~$2$. To see this, we use the following result, found in the
section of~\cite{Voight} devoted to applications of the strong approximation
theorem (paraphrased here with slightly different notation):
\begin{quote}
   Let $\HH$ be a definite quaternion algebra over a totally real field $F$ 
   with ring of integers $R$, and let $O$ be an $R$-order in~$\HH$ such that
   for all finite primes $\frakp$ of $R$, the local norm maps 
   $O_\frakp^*\to R_\frakp^*$ are surjective. Suppose the narrow class group of
   $R$ is trivial, and let $\frakp$ be a prime of $R$ which is unramified 
   in $\HH$. Then every ideal class of $O$ contains an integral $O$-ideal 
   whose reduced norm is a power of $\frakp$.
\end{quote}
Our order $\calObar[i]$ is an Eichler order, so the condition on the local 
norm maps is satisfied for all primes $p$ of $\ZZ$. Also, $2$ is unramified 
in $\HH$. 

The ideal $J$ is invertible because the order $\calObar[i]$ is hereditary
(because its reduced discriminant is squarefree).  Consider the right order 
$O$ of the lattice $I J^{-1}$; it is locally isomorphic to $\calObar[i]$ at 
every prime, so it also satisfies the condition on norm maps given above.  
Applying the quoted result with $\frakp = 2$,  we find that there is an 
$\alpha\in \HH$ such that $\alpha I J^{-1}$ is an integral ideal of norm 
$2^i$ for some $i>0$. In particular, $\alpha I \subseteq J$, and  the index
of $\alpha I$ in $J$ is a power of~$2$.

Translating this back into the category $\pol E^2$, we find that there is a 
$P\in \Hom(L,M)$ of determinant $2^i$ such that 
\[P^* M P = (\bar{\det P}) L. \]
In terms of abelian surfaces, this means that we have an isogeny $\varphi$ 
of degree $2^i$ from $\Jac C$ to $\Jac D$  such that the pullback $\varphi^* M$
of the principal polarization on $\Jac D$ is equal to $2^i$ times the 
polarization~$L$.  We will show that $\varphi$ can be written as a composition
of  generalized Richelot isogenies.

We note that the kernel of $\varphi$ is a maximal isotropic subgroup of the
$2^i$-torsion of $\Jac C$. Now there are two possibilities: Either 
$(\ker\varphi)\cap(\Jac C)[2]$ is all of $(\Jac C)[2]$, or it is an order-$4$
subgroup.

Suppose $(\ker\varphi)\cap(\Jac C)[2] = (\Jac C)[2]$.  Note that 
$(\Jac C)[2](\Fqbar)$ is a vector space over $k := \calO/2\calO \cong \FF_4$,
and the Weil pairing on $(\Jac C)[2]$ is \emph{semi-balanced} with respect to
this action of $k$; that is, we have $e_2(\alpha P, Q) = e_2(P, \bar{\alpha} Q)$
for every $P,Q\in(\Jac C)[2](\Fqbar)$ and $\alpha\in k$, where $\bar\alpha$ is
the conjugate of $\alpha$ over $\FF_2$. Then~\cite{Howe1995}*{Lemma~7.3, p.~2378}
shows that there is a one-dimensional isotropic $k$-subspace $G$ of 
$(\Jac C)[2]$. The group $G$ is a maximal isotropic subgroup of $(\Jac C)[2]$,
so we obtain a generalized Richelot isogeny whose kernel is contained in the
kernel of~$\varphi$. In other words, $\varphi$ factors through a generalized 
Richelot isogeny.

On the other hand, suppose $(\ker\varphi)\cap(\Jac C)[2]$ has order~$4$.  Then
$(\ker\varphi)(\Fqbar)$ is isomorphic as an abelian group to 
$\ZZ/2^i\ZZ \times \ZZ/2^i\ZZ$. Let $P$ and $Q$ be generators of 
$(\ker\varphi)(\Fqbar)$. Since $\ker\varphi$ is an isotropic subgroup of
$(\Jac C)[2^i]$, we have $e_{2^i}(P,Q) = 1$, where $e_{2^i}$ is the Weil
pairing on $(\Jac C)[2^i]$.  From the compatibility of the Weil pairing, it 
follows that $e_2(2^{i-1}P, 2^{i-1}Q) = 1$.  Let 
$G = (\ker\varphi)\cap(\Jac C)[2]$, so that $G(\Fqbar)$ is generated by 
$2^{i-1}P$ and $2^{i-1}Q$. We see that $G$ is a maximal isotropic subgroup of 
$(\Jac C)[2]$, and arguing as in the preceding paragraph, we find that  
$\varphi$ factors through a generalized Richelot isogeny.

In either case, $\varphi$ factors through a generalized Richelot isogeny.  
Repeating this argument, we find that $\varphi$ is in fact a composition of  
generalized Richelot isogenies.

This almost, but not quite, shows that after Step~\ref{genus2-step4} the set 
$S$ from Algorithm~\ref{A:genus2} contains all curves whose Jacobians are 
isomorphic (as unpolarized surfaces) to $E^2$.  The lacuna in the argument is 
that the sequence of generalized Richelot isogenies from $(E^2,L)$ to $(E^2,M)$
may pass through singular curves, as discussed above. Suppose this is the case,
and consider the split polarized surface $E_1\times E_2$ closest to $(E^2,M)$
along the given path of generalized Richelot isogenies. The first surface 
\emph{after} $E_1\times E_2$ will be a genus-$2$ curve obtained via the 
Howe--Lepr\'evost--Poonen 
construction~\cite{HoweLeprevostEtAl2000}*{Proposition~4, p.~324}, and so will 
appear in the set $S$ after step~\ref{genus2-step2}. The path of generalized 
Richelot isogenies from $E_1 \times E_2$ to $(E^2,M)$ does \emph{not} contain 
any further split polarized surfaces, so by the end of step~\ref{genus2-step4},
the set $S$ constructed by Algorithm~\ref{A:genus2} will contain $(E^2,M)$.
\end{proof}

\begin{remark}
In practice it can be helpful to modify Step~\ref{genus2-step2}(a) of 
Algorithm~\ref{A:genus2}.  In addition seeding the list $S$ with curves whose
Jacobians are $(2,2)$-isogenous to products of two elliptic curves of 
defect~$d$, we can also throw in curves whose Jacobians are $(3,3)$-isogenous
to a product of such elliptic curves, by using Algorithm~5.4 (p.~185) 
of~\cite{BrokerHoweEtAl2015}.
\end{remark}

\begin{expectation}
\label{HE:genus2}
Fix $d\ge 0$.  As $q$ varies over the odd prime powers, 
Algorithm~\textup{\ref{A:genus2}}, applied to $q$ and the list of trace-$d$ 
elliptic curves over $\Fq$, runs in time $\Otilde(q^{3/4})$.  Furthermore, 
if $d>0$ and $q$ is $d$-unexceptional, the algorithm produces $q^{3/4}$ curves, 
up to logarithmic factors.
\end{expectation}

\begin{proof}[Justification]
For a fixed~$d$, Heuristic~\ref{H:smalldefect2} suggests that the number of 
curves produced by the algorithm is bounded above by $q^{3/4}$, up to 
logarithmic factors.  When $d>0$ and $q$ is $d$-unexceptional, we expect the
number of curves is bounded below by a similar expression.  The time taken by
the algorithm is the size of its output, times factors of $\log q$.
\end{proof}

\section{Genus-\texorpdfstring{$4$}{4} curves with small defect}
\label{S:genus4}

In this section we present our algorithm for producing genus-$4$ curves with 
small defect.

\goodbreak
\begin{algorithm}
\label{A:genus4}
\begin{algtop}
\algin   An odd prime power $q = p^e$.
\algout  A genus-$4$ curve over $\Fq$, or the word ``failure''.
\end{algtop}
\begin{alglist}
\item Compute $m = \lfloor 2\sqrt{q} \rfloor$, set $d = 0$, and set $\calL = \{\}$.
\item \label{GENUS4-step2}
      Set $t = d - m$.  If $p\mid t$, then skip to Step~\ref{GENUS4-tailend}.
\item Set $\Delta = t^2 - 4q$ and write $\Delta = \Delta_0 F^2$ for a 
      fundamental discriminant $\Delta_0$.
\item Using the algorithm of~\cite{Sutherland2011}, compute the mod-$q$ 
      reductions of the Hilbert class polynomials of discriminant
      $\Delta_0 f^2$ for all divisors $f$ of $F$.
\item Compute the roots in $\Fq$ of these Hilbert class polynomials.
\item Compute representatives of all of the isomorphism classes of elliptic
      curves whose $j$-invariants are among these roots, and let $\calE_d$ be 
      the subset of those elliptic curves whose defect is $d$.
\item \label{GENUS4-addE}
      Add the elements of $\calE_d$ to the set $\calL$.      
\item \label{GENUS4-genus2}
      Run Algorithm~\ref{A:genus2} with inputs $q$ and $\calL$.
\item \label{GENUS4-eachC}
      For each curve $C$ in the output of Algorithm~\ref{A:genus2}:
\begin{algsublist}
\item Write $C$ as $y^2 = f$ for a sextic polynomial $f$.
\item For each factorization of $f$ into a pair $f_1,f_2$ of cubics (up to
      order and up to scaling by squares in $\Fq$), run 
      Algorithm~\ref{A:construction} on $q$, $f_1$, $f_2$, $\calL_1$, and
      $\calL_2$, where each $\calL_i$ is the set of curves in $\calL$ that
      are compatible with $f_i$.
\item If Algorithm~\ref{A:construction} outputs an element $a\in\PP^1(\Fq)$,
      output the curve $D_a(f_1,f_2)$ from Section~\textup{\ref{S:family}}
      and stop.
\end{algsublist}
\item \label{GENUS4-tailend}
      Increment $d$. If $d>m$, output ``failure'' and stop.  Otherwise, go to 
      step~\ref{GENUS4-step2}.
\end{alglist}
\end{algorithm}

\begin{remark}
In Step~\ref{GENUS4-step2} we avoid the case $p\mid t$ solely to make the
analysis of the algorithm simpler. In actual practice, we will encounter the 
case $p\mid t$ most often when $q$ is a square and $d=0$.  In this case, we 
should simply compute the set $\calE_d$ of (supersingular) elliptic curves of 
defect $0$ in any of a number of ways --- by using the formulas
from~\cite{KanekoZagier1998}, for example, or by using the algorithm of
Br\"oker~\cite{Broker2009} to compute one such curve and then computing the 
graph of $2$-isogenies --- and then continue with Step~\ref{GENUS4-addE}.
\end{remark}

\begin{expectation}
\label{HE:genus4}
For large odd $1$-unexceptional prime powers~$q$, 
Algorithm~\textup{\ref{A:genus4}} will output a curve of defect at most $4$ 
in time $\Otilde(q^{3/4})$.
\end{expectation}

The justification of this heuristic expectation depends on knowing something 
about the Galois structure of the Weierstrass points of the genus-$2$ curves 
produced by Algorithm~\ref{A:genus2}; we need to know that some fraction of 
these curves can be used in Algorithm~\ref{A:construction}.  
Proposition~\ref{P:splittings} below gives us the information we need.  Let us
begin by setting up the notation for the proposition.

Let $E$ be an ordinary elliptic curve over a finite field~$\Fq$ of odd 
characteristic and let $\pi$ be the Frobenius endomorphism of~$E$. Let $R$ be 
the subring $\ZZ[\pi]$ of $\End E$.  From~\cite{Deligne1969} we know that the 
abelian surfaces isogenous to $E^2$ are in bijection with the torsion-free
$R$-modules of rank~$2$, and results of Borevi\v{c} and
Faddeev~\cite{BorevichFaddeev1965} (summarized 
in~\cite{Kani2011}*{Theorem~48, p.~326}) classify such $R$-modules. Pushing
this classification back through Deligne's result, we find the following:
Every abelian surface $A$ isogenous to $E^2$ can be written as $E_1\times E_2$ 
for two elliptic curves with $\End E_1 \supseteq \End E_2$; if $A$ can also be 
written $E_1'\times E_2'$ where $\End E_1' \supseteq \End E_2'$, then 
$\End E_1' \cong \End E_1$ and $\End E_2' \cong \End E_2$; and, given any 
elliptic curve $E_1'$ with $\End E_1' \cong \End E_1$, there is a unique
$E_2'$ such that $A \cong E_1'\times E_2'$.

Suppose $A$ is isogenous to $E^2$, and write $A \cong E_1\times E_2$ as above.
Note that the conductor of the quadratic order $\End E_1$ divides that of 
$\End E_2$; it follows that the dimension $d_1$ of the $\FF_2$-vector space
$E_1[2](\Fq)$ is greater than or equal to the dimension $d_2$ of $E_2[2](\Fq)$.
Also, since $E_1(\Fq)$ has even order if and only if $E_2(\Fq)$ has even order,
we have $d_1 = 0$ if and only if $d_2 = 0$.

\begin{proposition}
\label{P:splittings}
With notation as above, let $\calC$ be the isogeny class of $E$ and let $\calS$ 
be the set of genus-$2$ curves over $\Fq$ whose Jacobians are isomorphic 
{\upleftparen}as unpolarized surfaces{\uprightparen} to $A$. 
\begin{enumerate}
\item If $(d_1,d_2) = (0,0)$, then every curve in $\calS$ can be written in 
      the form $y^2 = f_1 f_2$ for irreducible cubic polynomials 
      $f_1,f_2\in\Fq[x]$, and every curve in $\calC$ is compatible with both 
      $f_1$ and~$f_2$.
\item If $(d_1,d_2) = (2,2)$, then every curve in $\calS$ can be written in the
      form $y^2 = f_1 f_2$ for cubic polynomials $f_1,f_2\in\Fq[x]$ that are 
      completely split, and at least $1/4$ of the curves in $\calC$ are 
      compatible with both $f_1$ and~$f_2$.
\item If $(d_1,d_2) = (2,1)$, then every curve in $\calS$ can be written in the
      form $y^2 = f_1 f_2$ for cubic polynomials $f_1,f_2\in\Fq[x]$, where 
      $f_1$ has only one root and $f_2$ is completely split\textup{;} at least
      $1/2$ of the curves in $\calC$ are compatible with~$f_1$, and at least
      $1/4$ are compatible with~$f_2$.
\item Suppose  $(d_1,d_2) = (1,1)$. If $C$ is a curve in $\calS$, then $C$ 
      can either be written in the form $y^2=f_1 f_2 f_3$ for three irreducible 
      quadratic polynomials $f_1,f_2,f_3\in\Fq[x]$, or in the form 
      $y^2 = f_1 f_2$, where $f_1$ and $f_2$ are cubic polynomials, each with 
      exactly one root. At least $1/4$ of the curves in $\calS$ are of the 
      latter type\textup{;} and, for these curves, at least $1/2$ of the curves 
      in $\calC$ are compatible with both $f_1$ and~$f_2$.
\end{enumerate}
\end{proposition}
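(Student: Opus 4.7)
The plan is to translate the factorization structure of $f$ into the Galois action on the six Weierstrass points of $C$, and then relate that action to the $\Fq$-rational $2$-torsion of $\Jac C$ via the hypothesis $\Jac C \cong A = E_1 \times E_2$ (as unpolarized surfaces). Throughout, I will use the correspondence between factorizations $f = f_1 f_2$ into cubics (up to order and scaling by squares) and non-hyperelliptic involutions on $C$, equivalently, $(2,2)$-isogenies from $\Jac C$ onto a product of elliptic curves.

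First I would establish that every $C \in \calS$ admits at least one such $(3,3)$ factorization. The pullback to $E_1 \times E_2$ of the canonical principal polarization of $\Jac C$ is a positive-definite unimodular Hermitian form $L$, and an argument in the spirit of Section~\ref{S:Hayashida} (extended to handle the possibly non-isomorphic factors $E_1,E_2$) produces a $(2,2)$-isogeny from $(E_1\times E_2, L)$ onto a product polarization, hence a non-hyperelliptic involution of $C$ and a factorization $f = f_1 f_2$ into cubics.

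Next, for each such factorization the type (irreducible, one rational root, or completely split) is dictated by the Galois orbits on the six roots of $f$, which in turn are constrained by the $\FF_2$-dimension $d_1+d_2$ of $(\Jac C)[2](\Fq) \cong E_1[2](\Fq)\oplus E_2[2](\Fq)$, since non-trivial rational $2$-torsion of $\Jac C$ corresponds bijectively to Galois-stable partitions of the Weierstrass points into two even-sized subsets. Case by case, combining the observed value of $d_1+d_2$ with the existence of at least one Galois-stable $(3,3)$ partition determines the factorization pattern: in (1) no even-sized Galois-stable subset exists, forcing both cubics irreducible; in (2) all Weierstrass points are individually rational, so the cubics must split; (3) interpolates; and (4) allows either a $(2,2,2)$- or a $(3,3)$-structure with mixed parity orbits.

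For the compatibility counts, recall from Section~\ref{S:family} that $E \in \calC$ is compatible with $f_i$ iff the numbers of rational $2$-torsion points agree. The $2$-torsion structure of curves in $\calC$ is governed by the position of each $E$ in the $2$-isogeny volcano. In case~(1) the trace of Frobenius is odd, so every $E\in\calC$ has trivial rational $2$-torsion and is automatically compatible with both irreducible cubics. In cases~(2)--(4) the claimed fractions $1/4$, $1/2$, and $1/4$ follow from a standard count of how many curves in the volcano lie at the level where the $2$-torsion has the matching $\FF_2$-rank (see for instance~\cite{FouquetMorain2002}).

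The main obstacle is case~(4): here $C$ can take either of two genuinely different shapes, and we must show that at least $1/4$ of the curves in $\calS$ are of the two-cubics type. This requires examining how the non-hyperelliptic involutions — equivalently, the maximal isotropic $2$-subgroups of $(\Jac C)[2]$ — distribute according to the type of $(3,3)$ Galois partition they induce on the Weierstrass points. A careful local analysis at the prime $2$ of the decomposition $\Jac C \cong E_1 \times E_2$, parallel in spirit to the isotropic-subspace argument in the proof of Theorem~\ref{T:whenitworks}, should yield the required lower bound.
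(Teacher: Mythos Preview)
Your opening move rests on a correspondence that does not hold. A factorization $f=f_1f_2$ into cubics over $\Fq$ is nothing more than a Galois-stable partition of the six Weierstrass points into two \emph{triples}; a non-hyperelliptic involution, by contrast, descends to an involution of $\PP^1$ and therefore permutes the six branch points in three \emph{transpositions}, so it corresponds to a Richelot three-pairs configuration, not to a two-triples split. A $(2,2)$-isogeny from $\Jac C$ to a product thus does not manufacture a cubic factorization, and your claim that every $C\in\calS$ admits one is both unproved and, in case~(4), actually false: the curves whose Weierstrass orbit structure is $2{+}2{+}2$ have no Galois-stable triple at all. That is precisely why the proposition asserts only that \emph{at least $1/4$} of the curves in~$\calS$ are of the two-cubics type.

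The reduction to orbit structures also needs more than the single number $d_1+d_2$. For instance, the structures $6$, $5{+}1$, and $3{+}3$ all give $\dim(\Jac C)[2](\Fq)=0$, while $2{+}2{+}2$, $2{+}2{+}1{+}1$, and $3{+}1{+}1{+}1$ all give dimension~$2$. The paper separates these by comparing the $2$-torsion ranks of $\Jac C$ and of $E_1\times E_2$ over $\Fq$, $\FF_{q^2}$, and~$\FF_{q^3}$; only then does each pair $(d_1,d_2)$ force a unique orbit structure (or, in case~(4), exactly the two structures listed).

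Finally, the $1/4$ bound in case~(4) is the heart of the proposition, and ``a careful local analysis at~$2$'' is not a proof. The paper's argument is graph-theoretic: take as vertices the principal polarizations on $A$ and as edges the generalized Richelot isogenies; call a vertex \emph{bad}, \emph{good}, or \emph{split} according as the orbit type is $2{+}2{+}2$, $2{+}2{+}1{+}1$, or the polarization is a product. Writing each polarization as $\left[\begin{smallmatrix}k&\bar\alpha\\\alpha&\ell\end{smallmatrix}\right]$ and counting Galois-stable maximal isotropic subgroups (seven for bad, three for good or split) shows that a vertex is bad iff $k$ and $\ell$ are both even; an explicit automorphism then reduces to $k\equiv 2$ or $\ell\equiv 2\pmod 4$, after which a visible Richelot step lands on a good-or-split neighbour, and a direct computation with the construction of~\cite{HoweLeprevostEtAl2000} rules out bad--split adjacencies. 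Since good vertices have at most three edges, one gets $\#\{\text{bad}\}\le 3\,\#\{\text{good}\}$ and hence the $1/4$ bound. None of this mechanism is present in your sketch.
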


\begin{proof}
We note for future reference that if the curves in $\calC$ have even group 
orders, then the theory of isogeny volcanoes shows that either no curves in 
$\calC$ have $2$-rank equal to $2$, or at least $1/4$ of them do.  Similarly, 
at least $1/2$ of the curves in $\calC$ have $2$-rank equal to~$1$.

Suppose $C$ is a curve in $\calS$.  The six Weierstrass points of $C$ fall into
orbits under the action of the absolute Galois group of $\Fq$, and the orbit 
structure determines the ranks of the $2$-torsion subgroup of $\Jac C$ over the
extensions of~$\Fq$.  These ranks are also determined by the pair $(d_1,d_2)$. 
Comparing these ranks (for the first three extensions of $\Fq$) for the various 
possible Galois structures and the various possible pairs $(d_1,d_2)$, we find
the following:

The only Galois orbit structure compatible with $(d_1,d_2) = (0,0)$ is for the
Weierstrass points to be divided into two orbits of size $3$;  this translates 
into $C$ being of the form $y^2 = f_1 f_2$ for two irreducible cubics $f_1$ and 
$f_2$.  The curves in $\calC$ have no rational points of order~$2$, so $f_1$ 
and $f_2$ are compatible with all of the curves in~$\calC$.

Similarly, the only Galois orbit structure that is compatible with 
$(d_1,d_2) = (2,2)$ is six orbits of size~$1$; this means that $C$ can be
written (in several ways) as $y^2 = f_1 f_2$, for two completely split cubics
$f_1$ and $f_2$. Since $E_1$ and $E_2$ both have $2$-rank equal to $2$, we see
that at least $1/4$ of the curves in $\calC$ have $2$-rank equal to~$2$, and 
each such curve is compatible with both $f_1$ and $f_2$.

The only Galois orbit structure compatible with $(d_1,d_2) = (2,1)$ is one 
orbit of size $2$ and four of size $1$; this means that $C$ can be written
(in several ways) as $y^2 = f_1 f_2$, where $f_1$ is a cubic with only one root
and $f_2$ is a completely split cubic.  At least $1/2$ of the curves in $\calC$
have $2$-rank equal to $1$ and are compatible with~$f_1$; and, since $E_1$ has 
$2$-rank equal to~$2$, at least $1/4$ of the curves in $\calC$ have $2$-rank 
equal to $2$ and are compatible with $f_2$.

We are left to consider the case $(d_1,d_2) = (1,1)$. There are two Galois 
orbit structures compatible with these values of $d_1$ and~$d_2$: three orbits 
of size $2$, or two orbits of size $2$ and two of size~$1$. To analyze this 
case, we consider a graph $G$ constructed as follows.

We let the vertices of $G$ be the isomorphism classes of principal 
polarizations on~$A$.  Given two principal polarizations $\lambda$ and $\mu$,
we connect the associated vertices with an edge if and only if there is a diagram
\begin{equation}
\label{EQ:Richelot}
\xymatrix{
A \ar[r]^{2\lambda}\ar[d]_\Phi & \hat{A} \\
A \ar[r]^\mu                   & \hat{A}\ar[u]_{\hat{\Phi}}.\\
}
\end{equation}
Each edge from a vertex $\lambda$ gives rise to a Galois-stable subgroup of 
$A[2](\Fqbar)$ that is maximal isotropic with respect to the Weil pairing 
associated to~$\lambda$ --- namely, $\ker \Phi$.

Let us call a polarization $\lambda$ of $A$ \emph{bad} if the polarized variety
$(A,\lambda)$ is isomorphic to the Jacobian of a curve $C$ whose Weierstrass 
points form three Galois orbits of size $2$; \emph{good} if the polarized 
variety $(A,\lambda)$ is isomorphic to the Jacobian of a curve $C$ whose 
Weierstrass points form two Galois orbits of size $2$ and two of size~$1$; and 
\emph{split} if the polarized variety $(A,\lambda)$ is isomorphic to the
product of two elliptic curves with the corresponding product polarization.
We will show that in the graph~$G$, every bad vertex is adjacent to at least
one good vertex, and every good vertex is connected to at most three bad 
vertices.  From this it follows that there are at most three times as many bad
polarizations as good ones, and therefore at least $1/4$ of the curves in 
$\calS$ can be written $y^2 = f_1 f_2$ for cubics $f_1$ and $f_2$, each with 
exactly one root.

Any morphism from $A$ to its dual variety that is equal to its own dual 
morphism can be represented by a $2\times2$ array 
\[ L = \begin{bmatrix} k & \bar{\alpha} \\ \alpha & \ell\end{bmatrix} ,\]
where $k$ and $\ell$ are integers, $\alpha$ is a homomorphism from $E_1$ 
to~$E_2$, and $\bar{\alpha}$ is the dual morphism from $E_2$ to~$E_1$. Such an
array gives an endomorphism of $E_1\times E_2$; composed with the product 
polarization from $E_1\times E_2$ to its dual, this endomorphism gives a
polarization if $k$, $\ell$, and $k\ell - \degree \alpha$  are all positive,
and a principal polarization if $k\ell - \degree\alpha = 1$.

We will show that a polarization is bad if and only if the array associated to
it has $k$ and $\ell$ both even.  To do this, we will count the number of
Galois-stable subgroups of $A[2](\Fqbar)$ that are maximal isotropic subgroups 
with respect to the Weil pairing induced by the polarization.  First we count
the number of such subgroups for good, bad, and split curves, and then we count
these subgroups for polarizations of $A$ given by arrays as above.

Suppose $\lambda$ is a bad polarization, corresponding to a curve~$C$.  The
Galois-stable maximal isotropic subgroups of $(\Jac C)[2](\Fqbar)$ correspond 
to Galois-stable partitions of the six Weierstrass points of $C$ into three 
disjoint subsets of size two.  For a bad curve, it is easy to check that there 
are seven such partitions.

Similarly, if $\lambda$ is a good polarization, we find that there are exactly 
three Galois-stable maximal isotropic subgroups of the $2$-torsion.  If 
$\lambda$ is a split polarization, then again there are exactly three such 
subgroups.  This shows that good curves and split curves are connected to at 
most three other vertices in the graph $G$, and in particular are connected to 
at most three bad vertices.

Now let us count the Galois-stable subgroups of $A[2](\Fqbar)$ that are maximal
isotropic with respect to the Weil pairing obtained from a polarization 
described by an array as above.  First we simply count the Galois-stable 
subgroups of order four, \emph{without} the isotropy condition.

For each $i$ let $P_i$ be the rational $2$-torsion point of $E_i$ and let 
$Q_i$ be a non-rational $2$-torsion point of $E_i$. Note that $A[2](\Fqbar)$
contains exactly seven Galois-stable subgroups of order $4$, namely:
\begin{multline}
\label{EQ:kernels}
\langle (P_1,0), (Q_1,0)\rangle,\quad
\langle (0,P_2), (0,Q_1)\rangle,\\
\langle (P_1,0), (Q_1,P_2)\rangle,\quad
\langle (0,P_2), (P_1,Q_2)\rangle,\quad
\langle (P_1,P_2), (Q_1,Q_2)\rangle,\\
\langle (P_1,P_2), (Q_1,Q_2+P_2)\rangle,\quad
\langle (P_1,0), (0, P_2)\rangle.
\end{multline}
We check that all seven of these subgroups are isotropic with respect to the 
Weil pairing on the $2$-torsion associated to a polarization $\lambda$ if and
only if the array $L$ associated to $\lambda$ has $k$ and $\ell$ both even.
Thus, the bad polarizations are precisely the polarization that can be
represented by endomorphisms 
\[ L = \begin{bmatrix} k & \bar{\alpha} \\ \alpha & \ell\end{bmatrix} \]
such that both $k$ and $\ell$ are even.

Suppose $\lambda$ is bad, represented by an $L$ as above with $k$ and $\ell$ 
even.  First, we will show that there is a polarization isomorphic to $\lambda$
whose associated $L$ has $k\equiv 2 \bmod 4$ or $\ell \equiv 2\bmod 4$.

Certainly if $k\equiv 2 \bmod 4$ or $\ell \equiv 2\bmod 4$ we are done, so
assume that both $k$ and $\ell$ are divisible by $4$.  Since 
$k\ell - \degree\alpha = 1$, we see that $\degree\alpha$ is odd. Consider the 
automorphism 
\[
P = \begin{bmatrix} 1 & 0 \\ \alpha & 1 \end{bmatrix} \]
of $A$.  Pulling back $\lambda$ via $P$ replaces $L$ with 
\begin{align*}
(P^*)^{-1} L P^{-1} & = 
\begin{bmatrix} 1 & -\bar{\alpha} \\ 0       & 1   \end{bmatrix}
\begin{bmatrix} k &  \bar{\alpha} \\ \alpha  & \ell\end{bmatrix}
\begin{bmatrix} 1 & 0             \\ -\alpha & 1   \end{bmatrix}\\
& = 
\begin{bmatrix} k + (\ell - 2) \bar{\alpha}\alpha  & (1 - \ell) \bar{\alpha} \\ 
                (1 - \ell) \alpha                  & \ell  \end{bmatrix}.
\end{align*}                
Since $\bar{\alpha}\alpha = \degree\alpha$ is odd and $k$ and $\ell$ are 
divisible by~$4$, the upper left entry in this new array is congruent to $2$ 
modulo~$4$. Thus, if a polarization is bad, it is isomorphic to a bad 
polarization $\lambda$ for which either $k$ or $\ell$ is congruent to $2$
modulo~$4$.

Suppose $\lambda$ is a bad polarization with $k \equiv 2 \bmod 4$.  Consider 
the polarization $\mu$ we obtain by taking $\Phi$ in diagram~\eqref{EQ:Richelot}
to have kernel equal to the first group in \eqref{EQ:kernels}; that is,
we take $\Phi$ to be
$\left[\begin{smallmatrix} 2 & 0 \\ 0 & 1   \end{smallmatrix}\right]$.  
We compute that $\mu$ is given by the endomorphism 
\[M = \begin{bmatrix} k/2 & \bar{\alpha} \\ \alpha & 2\ell\end{bmatrix},\]
whose upper left entry is odd, so $\mu$ is either good or split.  Likewise, if 
$\ell \equiv 2 \bmod 4$, we can take  $\Phi$ in diagram~\eqref{EQ:Richelot}
to have kernel equal to the second group in \eqref{EQ:kernels}; that is, we 
take $\Phi$ to be 
$\left[\begin{smallmatrix} 1 & 0 \\ 0 & 2   \end{smallmatrix}\right]$.  Then
$\mu$ is given by the endomorphism 
\[M = \begin{bmatrix} 2k & \bar{\alpha} \\ \alpha & \ell/2\end{bmatrix},\]
whose lower right entry is odd, so $\mu$ is either good or split. This shows
that in the graph $G$, every bad vertex is connected to at least one vertex
that is good or split.  To complete our argument, we need only show that bad 
vertices cannot be adjacent to split vertices.

If $\mu$ is a split polarization, then the polarized variety $(A,\mu)$ is 
isomorphic to a product surface $F_1\times F_2$ with the product polarization,
and both $F_1$ and $F_2$ have $2$-rank equal to $1$.  If $\mu$ is connected to
a good or a bad polarization, corresponding to a curve $C$, then $C$ is 
obtained from $F_1$ and $F_2$ using the Howe--Lepr\'evost--Poonen  
construction~\cite{HoweLeprevostEtAl2000}*{Proposition~4}. Since $F_1$ and 
$F_2$ have rank $1$, they have models of the form
\[
F_1\colon \quad y^2 = (x^2 - n)(a_1 x - b_1) \qquad
F_2\colon \quad y^2 = (x^2 - n)(a_2 x - b_2),\]
where $n$ is a nonsquare in $\Fq$.  One computes that the curves obtained 
from $F_1$ and $F_2$ using the formulas 
of~\cite{HoweLeprevostEtAl2000}*{Proposition~4} are of the form $y^2 = h$, 
where $h$ is a sextic polynomial having $1$ and $-1$ as roots.  Thus, the 
vertices adjacent to a split vertex are either split or good.  In particular, 
a bad vertex is never adjacent to a split vertex.  
\end{proof}

\begin{proof}[Justification of Heuristic Expectation~\textup{\ref{HE:genus4}}]
Take $d = 1$.  We already noted that, under the GRH, the number of defect-$1$
elliptic curves over $\Fq$ for $1$-unexceptional $q$ grows like $q^{1/4}$, up
to logarithmic factors, and Heuristic Expectation~\ref{HE:genus2} tells us to 
expect the number of genus-$2$ curves over $\Fq$ of defect $2$ produced by 
Algorithm~\ref{A:genus2} to grow like $q^{3/4}$, up to logarithmic factors.
Proposition~\ref{P:splittings} tells us that at least $1/4$ of the curves
produced by the algorithm can be written as $y^2 = f_1 f_2$ for cubic 
polynomials $f_1$ and $f_2$ that are each compatible with at least $1/4$ of the
defect-$1$ elliptic curves over~$\Fq$.  

For each genus-$2$ curve $C$ in Step~\ref{GENUS4-eachC}, we expect 
Algorithm~\ref{A:construction} to succeed with probability on the order of 
$q^{-1/2}$, so we expect to have to apply Step~\ref{GENUS4-genus2} to about 
$q^{1/2}$ curves $C$ before we succeed. Each application of
Algorithm~\ref{A:construction} takes time $\Otilde(q^{1/4})$, so the total time
to success should be $\Otilde(q^{3/4}).$
\end{proof}

\section{Results}
\label{S:results}

We implemented our algorithms in Magma, and we ran Algorithm~\ref{A:genus4}
on all of the odd prime powers less than 100,000.  (This took a few days,
running in the background on a modest laptop computer.) There are $9684$ such
prime powers~$q$, four of which --- $3^3$, $3^5$, $3^9$, and $5^5$ --- are
$1$-exceptional in the sense defined in Section~\ref{S:genus2}.  The genus-$4$
curves produced by the algorithm had
\begin{itemize}
\item defect $0$ for 3027 of these $q$ ($\approx 31.3\%$),
\item defect $2$ for 2268 of these $q$ ($\approx 23.4\%$),
\item defect $4$ for 4054 of these $q$ ($\approx 41.9\%$),
\item defect $6$ for  330 of these $q$ ($\approx  3.4\%$), and
\item defect $8$ for    5 of these $q$ ($\approx 0.05\%$).
\end{itemize}

The five $q$ for which the best curve we found had defect $8$ are
the primes
\[154^2 + 3, \quad 160^2 + 160 + 3, \quad 221^2 + 16, \quad 282^2 + 282 + 5,
\quad\text{and}\quad 307^2 + 4.\]

We maintain our conviction that for large enough $1$-unexceptional $q$, our 
algorithm will find a curve of defect $4$ or less --- but $q$ may have to be
large indeed, because even though we expect the number of genus-$2$ defect-$2$
curves to grow like~$q^{3/4}$, the implied constant is fairly small.


\begin{bibdiv}
\begin{biblist}

\bib{AlsinaBayer2004}{book}{
  author={Alsina, Montserrat},
  author={Bayer, Pilar},
  title={Quaternion orders, quadratic forms, and Shimura curves},
  series={CRM Monograph Series},
  volume={22},
  publisher={American Mathematical Society, Providence, RI},
  date={2004},
  pages={xvi+196},
  isbn={0-8218-3359-6},
}

\bib{BorevichFaddeev1965}{article}{
  author={Borevi{\v{c}}, Z. I.},
  author={Faddeev, D. K.},
  title={Representations of orders with cyclic index},
  journal={Trudy Mat. Inst. Steklov},
  volume={80},
  date={1965},
  pages={51--65},
  translation = {
     title = {Proceedings of the Steklov Institute of Mathematics. 
             No. 80 (1965): Algebraic number theory and representations},
     editor = {Faddeev, D. K.},
     translator = {Bhanu Murthy, T. S.},
     publisher = {American Mathematical Society}, 
     place = {Providence, R.I.},
     date = {1968},
  },
  issn={0371-9685},
}

\bib{BorevichShafarevich1966}{book}{
  author={Borevich, A. I.},
  author={Shafarevich, I. R.},
  title={Number theory},
  series={Pure and Applied Mathematics, Vol. 20},
  publisher={Academic Press, New York--London},
  date={1966},
  pages={x+435},
  note={Translated from the Russian by Newcomb Greenleaf.}
}

\bib{Broker2009}{article}{
  author={Br{\"o}ker, Reinier},
  title={Constructing supersingular elliptic curves},
  journal={J. Comb. Number Theory},
  volume={1},
  date={2009},
  number={3},
  pages={269--273},
}

\bib{BrokerHoweEtAl2015}{article}{
  author={Br\"oker, Reinier},
  author={Howe, Everett W.}, 
  author={Lauter, Kristin E.},
  author={Stevenhagen, Peter},
  title={Genus-$2$ curves and Jacobians with a given number of points},
  journal={LMS J. Comput. Math.},
  volume={18},
  date={2015},
  number={1},
  pages={170--197},
  doi={\href{http://dx.doi.org/10.1112/S1461157014000461}
             {10.1112/S1461157014000461}},
}

\bib{BruinDoerksen2011}{article}{
  author={Bruin, Nils},
  author={Doerksen, Kevin},
  title={The arithmetic of genus two curves with $(4,4)$-split Jacobians},
  journal={Canad. J. Math.},
  volume={63},
  date={2011},
  number={5},
  pages={992--1024},
  issn={0008-414X},
  doi={\href{http://dx.doi.org/10.4153/CJM-2011-039-3}
             {10.4153/CJM-2011-039-3}},
}

\bib{Deligne1969}{article}{
  author={Deligne, Pierre},
  title={Vari\'et\'es ab\'eliennes ordinaires sur un corps fini},
  journal={Invent. Math.},
  volume={8},
  date={1969},
  pages={238--243},
  issn={0020-9910},
  doi={\href{http://dx.doi.org/10.1007/BF01406076}
                          {10.1007/BF01406076}},
}

\bib{Deuring1941}{article}{
  author={Deuring, Max},
  title={Die Typen der Multiplikatorenringe elliptischer
  Funktionenk\"orper},
  journal={Abh. Math. Sem. Hansischen Univ.},
  volume={14},
  date={1941},
  pages={197--272},
}

\bib{FouquetMorain2002}{article}{
  author={Fouquet, Mireille},
  author={Morain, Fran{\c{c}}ois},
  title={Isogeny volcanoes and the SEA algorithm},
  conference={
     title={Algorithmic number theory},
     address={Sydney},
     date={2002},
  },
  book={
     editor = {C. Fieker},
     editor = {D. R. Kohel},
     series={Lecture Notes in Comput. Sci.},
     volume={2369},
     publisher={Springer},
     place={Berlin},
  },
  date={2002},
  pages={276--291},
  doi={\href{http://dx.doi.org/10.1007/3-540-45455-1_23}
             {10.1007/3-540-45455-1\_23}},
}

\bib{GeerVlugt2000}{article}{
  author={van der Geer, Gerard},
  author={van der Vlugt, Marcel},
  title={Tables of curves with many points},
  journal={Math. Comp.},
  volume={69},
  date={2000},
  number={230},
  pages={797--810},
  doi={\href{http://dx.doi.org/10.1090/S0025-5718-99-01143-6}
             {10.1090/S0025-5718-99-01143-6}},
}

\bib{GonzalezGuardiaEtAl2005}{article}{
   author={Gonz{\'a}lez, Josep},
   author={Gu{\`a}rdia, Jordi},
   author={Rotger, Victor},
   title={Abelian surfaces of ${\rm GL}_2$-type as Jacobians of curves},
   journal={Acta Arith.},
   volume={116},
   date={2005},
   number={3},
   pages={263--287},
   doi={\href{http://dx.doi.org/10.4064/aa116-3-3}
             {10.4064/aa116-3-3}},
}

\bib{HardyWright1938}{book}{
  author={Hardy, G. H.},
  author={Wright, E. M.},
  title={An introduction to the theory of numbers},
  edition={4},
  publisher={The Clarendon Press, Oxford University Press, New York},
  date={1968},
  pages={xvi+421},
  note = {\url{http://archive.org/details/AnIntroductionToTheTheoryOfNumbers-4thEd-G.h.HardyE.m.Wright}},
}

\bib{Hayashida1968}{article}{
  author={Hayashida, Tsuyoshi},
  title={A class number associated with the product of an elliptic curve
  with itself},
  journal={J. Math. Soc. Japan},
  volume={20},
  date={1968},
  pages={26--43},
  issn={0025-5645},
  doi={\href{http://dx.doi.org/10.2969/jmsj/02010026}
             {10.2969/jmsj/02010026}},
}

\bib{HayashidaNishi1965}{article}{
  author={Hayashida, Tsuyoshi},
  author={Nishi, Mieo},
  title={Existence of curves of genus two on a product of two elliptic
  curves},
  journal={J. Math. Soc. Japan},
  volume={17},
  date={1965},
  pages={1--16},
  issn={0025-5645},
  doi={\href{http://dx.doi.org/10.2969/jmsj/01710001}
             {10.2969/jmsj/01710001}},
}

\bib{Howe1995}{article}{
  author={Howe, Everett W.},
  title={Principally polarized ordinary abelian varieties over finite
  fields},
  journal={Trans. Amer. Math. Soc.},
  volume={347},
  date={1995},
  number={7},
  pages={2361--2401},
  issn={0002-9947},
  doi={\href{http://dx.doi.org/10.2307/2154828}
              {10.2307/2154828}},
}

\bib{Howe2012}{article}{
  author={Howe, Everett W.},
  title={New bounds on the maximum number of points on genus-4 curves over
  small finite fields},
  conference={
     title={Arithmetic, geometry, cryptography and coding theory},
  },
  book={
     series={Contemp. Math.},
     volume={574},
     publisher={Amer. Math. Soc., Providence, RI},
  },
  date={2012},
  doi={\href{http://dx.doi.org/10.1090/conm/574/11431}
             {10.1090/conm/574/11431}},
}

\bib{HoweLauter2003}{article}{
  author={Howe, E. W.},
  author={Lauter, K. E.},
  title={Improved upper bounds for the number of points on curves over
  finite fields},
  journal={Ann. Inst. Fourier (Grenoble)},
  volume={53},
  date={2003},
  number={6},
  pages={1677--1737},
  issn={0373-0956},
  doi={\href{http://dx.doi.org/10.5802/aif.1990}
             {10.5802/aif.1990}},
  note={Corrigendum: Ann. Inst. Fourier (Grenoble) \textbf{57}
        (2007), no.~3, 1019--1021,
        DOI \href{http://dx.doi.org/10.5802/aif.2284}{10.5802/aif.2284}},
}

\bib{HoweLauter2012}{article}{
  author={Howe, Everett W.},
  author={Lauter, Kristin E.},
  title={New methods for bounding the number of points on curves over
  finite fields},
  conference={
     title={Geometry and arithmetic},
  },
  book={
     series={EMS Ser. Congr. Rep.},
     publisher={Eur. Math. Soc., Z\"urich},
  },
  date={2012},
  pages={173--212},
  doi={\href{http://dx.doi.org/10.4171/119-1/12}
             {10.4171/119-1/12}},
}

\bib{HoweLeprevostEtAl2000}{article}{
  author={Howe, Everett W.},
  author={Lepr{\'e}vost, Franck},
  author={Poonen, Bjorn},
  title={Large torsion subgroups of split Jacobians of curves of genus two
  or three},
  journal={Forum Math.},
  volume={12},
  date={2000},
  number={3},
  pages={315--364},
  doi={\href{http://dx.doi.org/10.1515/form.2000.008}
             {10.1515/form.2000.008}},
}

\bib{HoweNartEtAl2009}{article}{
  author={Howe, Everett W.},
  author={Nart, Enric},
  author={Ritzenthaler, Christophe},
  title={Jacobians in isogeny classes of abelian surfaces over finite
  fields},
  journal={Ann. Inst. Fourier (Grenoble)},
  volume={59},
  date={2009},
  number={1},
  pages={239--289},
  doi={\href{http://dx.doi.org/10.5802/aif.2430}
             {10.5802/aif.2430}},
}

\bib{KanekoZagier1998}{article}{
  author={Kaneko, M.},
  author={Zagier, D.},
  title={Supersingular $j$-invariants, hypergeometric series, and Atkin's
  orthogonal polynomials},
  conference={
     title={Computational perspectives on number theory},
     address={Chicago, IL},
     date={1995},
  },
  book={
     series={AMS/IP Stud. Adv. Math.},
     volume={7},
     publisher={Amer. Math. Soc., Providence, RI},
  },
  date={1998},
  pages={97--126},
}

\bib{Kani2011}{article}{
  author={Kani, Ernst},
  title={Products of CM elliptic curves},
  journal={Collect. Math.},
  volume={62},
  date={2011},
  number={3},
  pages={297--339},
  issn={0010-0757},
  doi={\href{http://dx.doi.org/10.1007/s13348-010-0029-1}
             {10.1007/s13348-010-0029-1}},
}

\bib{Kohel1996}{thesis}{
  author={Kohel, David Russell},
  title={Endomorphism rings of elliptic curves over finite fields},
  type={Ph.D.~Thesis},
  organization={University of California, Berkeley},
  date={1996},
}

\bib{LachaudRitzenthalerEtAl2010}{article}{
  author={Lachaud, Gilles},
  author={Ritzenthaler, Christophe},
  author={Zykin, Alexey},
  title={Jacobians among abelian threefolds: a formula of Klein and a
  question of Serre},
  journal={Math. Res. Lett.},
  volume={17},
  date={2010},
  number={2},
  pages={323--333},
  doi={\href{http://dx.doi.org/10.4310/MRL.2010.v17.n2.a11}
             {10.4310/MRL.2010.v17.n2.a11}},
}

\bib{Littlewood1927}{article}{
  author={Littlewood, J. E.},
  title={On the Class-Number of the Corpus $P(\sqrt{-k})$},
  journal={Proc. London Math. Soc.},
  volume={S2-27},
  number={1},
  pages={358},
  issn={0024-6115},
  doi={\href{http://dx.doi.org/10.1112/plms/s2-27.1.358}
             {10.1112/plms/s2-27.1.358}},
}

\bib{Mestre2010}{article}{
  author={Mestre, Jean-Fran{\c{c}}ois},
  title={Courbes de genre $3$ avec $S_3$ comme groupe d'automorphismes},
  date={2010},
  note={\href{http://arxiv.org/abs/1002.4751}
             {arXiv:1002.4751 [math.AG]}},
}

\bib{Ribet1989}{article}{
  author={Ribet, Kenneth A.},
  title={Bimodules and abelian surfaces},
  conference={
     title={Algebraic number theory},
  },
  book={
     editor={J. Coates}, 
     editor={R. Greenberg}, 
     editor={B. Mazur},
     editor={I. Satake},
     series={Adv. Stud. Pure Math.},
     volume={17},
     publisher={Academic Press, Boston, MA},
  },
  date={1989},
  pages={359--407},
}

\bib{Schoof1987}{article}{
  author={Schoof, Ren{\'e}},
  title={Nonsingular plane cubic curves over finite fields},
  journal={J. Combin. Theory Ser. A},
  volume={46},
  date={1987},
  number={2},
  pages={183--211},
  issn={0097-3165},
  doi={\href{http://dx.doi.org/10.1016/0097-3165(87)90003-3}
             {10.1016/0097-3165(87)90003-3}},
}

\bib{Serre1983a}{article}{
  author={Serre, Jean-Pierre},
  title={Sur le nombre des points rationnels d'une courbe alg\'ebrique sur
  un corps fini},
  journal={C. R. Acad. Sci. Paris S\'er. I Math.},
  volume={296},
  date={1983},
  number={9},
  pages={397--402},
  note = {= \OE{}uvres [128]. \url{http://gallica.bnf.fr/ark:/12148/bpt6k31623/f592} },
}

\bib{Serre1983b}{article}{
  author={Serre, Jean-Pierre},
  title={Nombres de points des courbes alg\'ebriques sur ${\bf F}_{q}$},
  conference={
     title={Seminar on number theory, 1982--1983},
     address={Talence},
     date={1982/1983},
  },
  book={
     publisher={Univ. Bordeaux I},
     place={Talence},
  },
  date={1983},
  pages={Exp. No. 22, 8},
  note = { = \OE{}uvres [129]},
}

\bib{Serre1984}{article}{
  author = {Serre, Jean-Pierre},
  title = {R\'esum\'e des cours de 1983--1984},
  journal = {Ann. Coll\`ege France},
  date = {1984},
  pages = {79--83},
  note = {=\OE{}uvres [132]},
}

\bib{Sutherland2011}{article}{
  author={Sutherland, Andrew V.},
  title={Computing Hilbert class polynomials with the Chinese remainder
  theorem},
  journal={Math. Comp.},
  volume={80},
  date={2011},
  number={273},
  pages={501--538},
  issn={0025-5718},
  doi={\href{http://dx.doi.org/10.1090/S0025-5718-2010-02373-7}
             {10.1090/S0025-5718-2010-02373-7}},
}

\bib{Voight}{book}{
  author = {Voight, John},
  title = {Quaternion algebras},
  note = {To appear in the Springer Graduate Texts in Mathematics series},
}

\bib{Waterhouse1969}{article}{
  author={Waterhouse, William C.},
  title={Abelian varieties over finite fields},
  journal={Ann. Sci. \'Ecole Norm. Sup. (4)},
  volume={2},
  date={1969},
  pages={521--560},
  note = {\url{http://www.numdam.org/item?id=ASENS_1969_4_2_4_521_0}},
}

\bib{Weil1940}{article}{
  author={Weil, Andr{\'e}},
  title={Sur les fonctions alg\'ebriques \`a corps de constantes fini},
  journal={C. R. Acad. Sci. Paris},
  volume={210},
  date={1940},
  pages={592--594},
  note = {\url{http://gallica.bnf.fr/ark:/12148/bpt6k31623/f592}},
}

\bib{Weil1941}{article}{
  author={Weil, Andr{\'e}},
  title={On the Riemann hypothesis in function-fields},
  journal={Proc. Nat. Acad. Sci. U. S. A.},
  volume={27},
  date={1941},
  pages={345--347},
  note = {\url{http://www.pnas.org/content/27/7/345.short}},
}

\bib{Weil1945}{book}{
  author={Weil, Andr{\'e}},
  title={Sur les courbes alg\'ebriques et les vari\'et\'es qui s'en
  d\'eduisent},
  series={Actualit\'es Sci. Ind., no. 1041 = Publ. Inst. Math. Univ.
  Strasbourg {\bf 7} (1945)},
  publisher={Hermann et Cie., Paris},
  date={1948},
  pages={iv+85},
}

\bib{Weil1946}{book}{
  author={Weil, Andr{\'e}},
  title={Vari\'et\'es ab\'eliennes et courbes alg\'ebriques},
  series={Actualit\'es Sci. Ind., no. 1064 = Publ. Inst. Math. Univ.
  Strasbourg 8 (1946)},
  publisher={Hermann \& Cie., Paris},
  date={1948},
  pages={165},
}

\bib{Weinberger1973}{article}{
  author={Weinberger, P. J.},
  title={Exponents of the class groups of complex quadratic fields},
  journal={Acta Arith.},
  volume={22},
  date={1973},
  pages={117--124},
  issn={0065-1036},
  note={\url{http://pldml.icm.edu.pl/pldml/element/bwmeta1.element.bwnjournal-article-aav22i2p117bwm}},
}

\end{biblist}
\end{bibdiv}

\end{document}